\definecolor{awesome}{rgb}{1.0, 0.13, 0.32}
\definecolor{bondiblue}{rgb}{0.0, 0.58, 0.71}
\definecolor{cocoabrown}{rgb}{0.82, 0.41, 0.12}
\definecolor{ufogreen}{rgb}{0.24, 0.82, 0.44}
\definecolor{bananamania}{rgb}{0.98, 0.91, 0.71}
\newcommand{\balpha}{{\boldsymbol{\alpha}}}
\newcommand{\Z}{\mathbb{Z}}
\newcommand{\R}{\mathbb{R}}
\newcommand{\C}{\mathbb{C}}
\title[Complex continued fraction algorithms]{A finiteness condition for complex continued fraction algorithms}
\author[C.~Kalle]{Charlene~Kalle}
\author[F.~M.~S\'elley]{Fanni~M.~S\'elley}
\author[J.~M.~Thuswaldner]{J\"org~M.~Thuswaldner}
\address[C.K.\ and F.S.]{Mathematisch Instituut, Leiden University, Niels Bohrweg 1, 2333CA Leiden, The Netherlands}
\email{kallecccj@math.leidenuniv.nl}
\email{selleyf@gmail.com}
\address[J.T.]{Lehrstuhl f\"ur Mathematik und Statistik, Montanuniversit\"at Leoben, Franz Josef Stra\ss{}e~18, A-8700 Leoben, Austria.}
\email{joerg.thuswaldner@unileoben.ac.at}
\begin{document}

\newtheorem{prop}{Proposition}[section]
\newtheorem{theorem}{Theorem}[section]
\newtheorem{lemma}{Lemma}[section]
\newtheorem{cor}{Corollary}[section]
\newtheorem{remark}{Remark}[section]
\theoremstyle{definition}
\newtheorem{defn}{Definition}[section]
\newtheorem{ex}{Example}[section]

\subjclass[2020]{Primary: 11J70, secondary: 37A99}
\keywords{Complex continued fractions, finite range condition}

\begin{abstract}
It is desirable that a given continued fraction algorithm is simple in the sense that the possible representations can be characterized in an easy way. In this context the so-called {\it finite range condition} plays a prominent role. We show that this condition holds for complex {\it $\balpha$-Hurwitz algorithms} with parameters $\balpha\in\mathbb{Q}^2$. This is equivalent to the existence of certain finite partitions related to these algorithms and lies at the root of explorations into their Diophantine properties. Our result provides a partial answer to a recent question formulated by Lukyanenko and Vandehey.
\end{abstract}

\maketitle

\section{Introduction}
Since antiquity, many different kinds of representations of numbers by strings of symbols have been used for various purposes. The most famous one, the {\it decimal system}, has a particularly nice property: We can represent positive integers by forming finite strings of digits taken from the set $\{0,\ldots, 9\}$ in an arbitrary way. If we forbid leading zeros, each of these strings represents a positive integer uniquely. 

Things are not always that easy and often we get restrictions on the digit strings. For instance, consider the {\it Fibonacci sequence} $(F_n)_{n\ge 0}$ defined by $F_0=0$, $F_1=1$, and $F_{n}=F_{n-1}+ F_{n-2}$ for $n \ge 2$. Each positive integer $N\in\mathbb{N}$ can be represented in the form
\begin{equation}\label{eq:Zeckendorf}
N= \sum_{j=0}^{L-1} \varepsilon_j F_{j+2}
\end{equation}
(for some ``length'' $L\in \mathbb{N}$) with digits $\varepsilon_0,\ldots,\varepsilon_{L-1} \in \{0,1\}$.
However, even if we forbid leading zeros, uniqueness is maintained only if the digit string $\varepsilon_0\cdots\varepsilon_{L-1}$ does not contain two consecutive ones, {\it i.e.}, if the pattern $11$ is forbidden. In this case the expansion \eqref{eq:Zeckendorf} is called the
{\it Zeckendorf expansion} of $N$. Expansions of this kind go back to \cite{Z:72}.  

In the present paper, we are concerned with {\it continued fraction algorithms}. They allow to represent a real or complex number $z$ by digits (called {\it partial quotients}) $a_1,a_2,a_3,\ldots$ taken from an infinite set (like, {\it e.g.}, the set of positive integers) as
\begin{equation}\label{eq:cfintro}
z = \cfrac1{a_1+\cfrac1{a_2 +\cfrac1{a_3 + \ddots}}}.
\end{equation}
The {\it classical continued fraction algorithm} for real numbers is a generalization of {\it Euclid’s algorithm} which performs division with remainder on two integers. 
It can be described dynamically 
through the {\it Gauss map} 
\[
G:(0,1) \to[0,1);\quad z \mapsto \frac1z-\Big\lfloor\frac1z\Big\rfloor,
\]
where $\lfloor x\rfloor = \max\{n\in \mathbb{Z} \,:\, n\le x\}$ denotes the integer part of a real number $x$. 
For $z \in \mathbb R$ we iteratively apply $G$ on $z_0 = z-\lfloor z\rfloor$. Writing $z_n=G^n(z_0)$ and $a_n=\lfloor\tfrac{1}{G^{n-1}(z_0)}\rfloor$ for each $n \ge 1$ we find that
\[ z_0=
\frac1{a_1 + z_1}=  \cfrac1{a_1 + \cfrac1{a_2+z_2}} = \cdots =  \cfrac{1}{a_1 + \cfrac{1}{a_2 + \cfrac1{\ddots +\cfrac{1}{a_n+z_n}}}}, \]
provided that $z_0,\ldots, z_{n-1}$ do not vanish. If $z_0$ is irrational, $z_n$ will never vanish and the process can be performed {\it ad infinitum} leading to the infinite representation~\eqref{eq:cfintro} of $z_0=z-\lfloor z \rfloor$ (see \cite[Chapter~I]{RS:92}). In fact, one can show that each infinite string $a_1a_2a_3\cdots$ of positive integers uniquely represents an irrational number $z_0\in (0,1)$ in the form \eqref{eq:cfintro}, so there are no forbidden strings of digits. The sequence of integers $(a_i)_{i\ge 1}$ is called  the {\it continued fraction expansion}\footnote{The term ``continued fraction'' was introduced by John Wallis in 1656 in his {\it arithmetica infinitorum}~\cite{wallis1656}.} of $z_0$ and~\eqref{eq:cfintro} is written shortly as $z_0=[a_1,a_2,\ldots]$.

If we consider only the first $n$ digits of the expansion \eqref{eq:cfintro} we obtain the rational numbers 
\[
\cfrac{p_n}{q_n}=[a_1,a_2,\ldots, a_n]= \cfrac{1}{a_1 + \cfrac{1}{a_2 + \cfrac1{\ddots +\cfrac{1}{a_n}}}},
\]
the {\it convergents} of $z_0$. These convergents surface naturally in Diophantine approximation, since by Lagrange's best approximation law they optimally approximate $z_0$ in the sense that
$
| \frac{p_n}{q_n} - z_0 | < | \frac{p}{q} - z_0 |
$
holds for all $p,q\in\mathbb{N}$ with $q\le q_n$ and $(p,q)\not=(p_n,q_n)$.

Real numbers can be represented by continued fraction expansions in many other ways. Arguably the most efficient continued fraction algorithm is the {\em nearest integer algorithm} introduced first by Bernhard Minnigerode~\cite{Min73} in 1873. For each irrational number $z\in[-\tfrac12, \tfrac12)\setminus\{0\}$ this algorithm produces an expansion of the form \eqref{eq:cfintro} with $a_n\in\mathbb{Z}$, $|a_n| \ge 2$, for all $n \ge 1$. Nearest integer continued fraction expansions can be obtained dynamically in a way very similar to above. In particular, letting  
\[ 
F: \Big[-\frac12, \frac12 \Big)\setminus\{0\} \to \Big[ -\frac12, \frac12 \Big);\quad 
z \mapsto \frac 1z - \Big\lfloor \frac1z \Big\rfloor_{1/2},
\]
where $\lfloor z \rfloor_{1/2}$ denotes the unique integer $a$ satisfying $z-a \in \left[ -\frac12, \frac12 \right)$, one sets $a_n=\lfloor\tfrac{1}{F^{n-1}(z_0)}\rfloor_{1/2}$ for each $n \ge 1$. The efficiency of this algorithm lies in the fact that if one considers the rational approximations to $z$ given by the convergents $\frac{p_n}{q_n}$ of this algorithm, then typically the sequence $(\frac{p_n}{q_n})_{n \ge 1}$ converges faster to $z$ than the convergents of other similar algorithms. However, like for the Fibonacci expansion, we need restrictions on the possible digit strings in order to get unique expansions.  

For the algorithms $G$ and $F$, write $\Delta_G(a_1, \ldots, a_n)$ and $\Delta_F(a_1, \ldots, a_n)$ for the {\em cylinder set} of all $x$ that have $a_1, \ldots, a_n$ as the first $n$ digits in their expansion according to $G$ and $F$, respectively. The fact that the set of forbidden strings of digits is easy to describe relates to the {\em finite range property} as given in \cite{Yur95}. More specifically, the collections
\[ \begin{split}
\mathcal G = \, & \{ G^n (\Delta_G(a_1, \ldots, a_n))\, : \, a_1, \ldots, a_n \in \mathbb N,\, n \ge 1\},\\
\mathcal F =\, &\{ F^n (\Delta_F(a_1, \ldots, a_n))\, :  \, a_1, \ldots, a_n \in \mathbb N,\, n \ge 1\}
\end{split}\]
both contain only finitely many sets up to Lebesgue null sets. In fact, we have $\mathcal G = \{ [0,1)\}$ and $\mathcal F = \{ [ -\frac12, 0), (0, \frac12]\}$. Note that $\mathcal G$ is a singleton, which reflects the fact that each string $a_1a_2\cdots \in \mathbb{N}^\mathbb{N}$ can occur as a classical continued fraction expansion. On the contrary, $\mathcal F$ contains two elements which entails that the nearest integer continued fraction expansions are subject to restrictions on the digit strings. An automata theoretic description of such restrictions for a wide class of continued fraction algorithms for real numbers, including Minnegerode's algorithm, is presented in \cite{Sha13}. For general information on continued fraction algorithms see for instance the classical work of Oskar Perron~\cite{Per13}.

An example of a continued fraction algorithm for complex numbers is furnished by the {\em Hurwitz algorithm} that will be presented in Section~\ref{sec:hurwitz}. Its digit strings $a_1a_2a_3\cdots$ are characterized by a finite range condition that is governed by the finite partition depicted in Figure~\ref{fig:markov1/2,1/2}
\begin{figure}[h]
         \centering
    \includegraphics[width=0.3\textwidth]{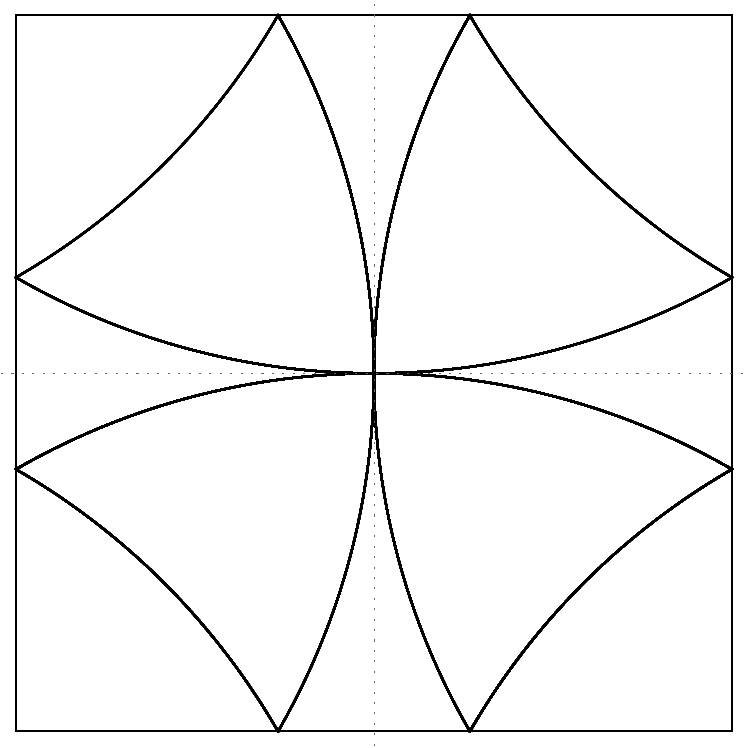}
         \caption{A finite partition for the Hurwitz algorithm. 
         } \label{fig:markov1/2,1/2}
\end{figure}
(see Section~\ref{sec:hurwitz} for further explanations). The Hurwitz algorithm is studied in the literature with a strong revival recently, see {\it e.g.}~\cite{Abr22,BGH23,hiary2022calculations,LV23} and the references therein. For a historical account we refer to \cite{oswald2014complex}.  The finite range condition lies at the heart of the results in \cite{Ei2018,lakein1973approximation,Nak76} that give estimates on the approximations $|z-\frac{p_n}{q_n}|$, $n \ge 1$, see also \cite{Hensley:06}.

One-parameter perturbations of the Hurwitz algorithm, so called {\em $\balpha$-Hurwitz algorithms}, gained interest recently ({\em cf}.~for instance \cite{Abr22,dani2014continued,LV23,LV24}). We introduce them in Section~\ref{sec:alpha}. It has been asked if the set of possible digit strings $a_1a_2a_3\cdots$ produced by the $\balpha$-Hurwitz algorithm for a given parameter $\balpha$ can be characterized by a finite range condition, and it is our aim to shed some light on this problem. In particular, we will show that for each rational parameter $\balpha$, the $\balpha$-Hurwitz algorithm satisfies this condition by constructing a finite partition for them (this result is stated in Theorem~\ref{t:mp}). In doing so, we give a partial answer to a question stated in \cite[Section~1]{LV24}. Moreover, since the finite range condition is crucial when one wants to derive further Diophantine properties of a continued fraction algorithm, Theorem~\ref{t:mp} can be regarded as a starting point for the further exploration of $\balpha$-Hurwitz algorithms. The proof of our result is contained in Section~\ref{sec:markov}, and we give some perspectives for further research in Section~\ref{sec:perspectives}.

\section{The Hurwitz algorithm}\label{sec:hurwitz}
At the end of the nineteenth century the two brothers Adolf and Julius Hurwitz both created continued fraction algorithms for complex numbers, see \cite{H:1887,HurJ02}. Here we consider the complex continued fraction algorithm introduced by Adolf Hurwitz \cite{H:1887} in 1887, which is a generalization to the complex plane of the nearest integer algorithm for real numbers. Let
$
\mathbb Z[i] = \{ a+bi \, : \, a, b \in \mathbb Z\}
$
with $i=\sqrt{-1}$ denote the set of {\it Gaussian integers} and set
\[ 
U:=\big\{ z \in \mathbb{C} \, : \, -\tfrac12 \le \Re z, \Im z < \tfrac12\big\},
\]
where $\Re z$ and $\Im z$ denote the real and imaginary part of $z\in \mathbb{C}$, respectively. The {\it Hurwitz algorithm} is defined in total analogy to the nearest integer continued fraction algorithm. The set $U$ will play the role of the interval $[-\frac12, \frac12)$ and the floor function $\lfloor \cdot \rfloor_{1/2}$ is replaced by the function $\lfloor \cdot \rfloor_U : \C \to U;\,  z  \mapsto z-w$, where $w$ is the unique Gaussian integer satisfying $z - w \in U$. The {\it Hurwitz map} is given by 
\begin{equation}\label{eq:mapT}
T:U\setminus\{0\} \to U;\quad z \mapsto \frac1z - \Big\lfloor \frac1z \Big\rfloor_U, 
\end{equation}
and the Hurwitz algorithm is defined in terms of iterates of $T$. In particular, using this map one can assign to each $z \in U$ a {\it Hurwitz expansion} of the form \eqref{eq:cfintro}. Here $a_1,a_2,\ldots \in \mathbb Z[i] \setminus \{ 0, \pm 1, \pm i\}$ are given by $a_n=\lfloor \frac1{T^{n-1}(z)}\rfloor$ provided that $T^{n}(z)$, $n\ge 0$, do not vanish. If one of these iterates vanishes, the expansion of $z$ becomes finite. In any case, the Hurwitz expansion of $z\in U$ represents $z$ (see {\it e.g.}~\cite[Chapter~5]{Hensley:06} for a detailed account on this algorithm). 
Let us define {\it cylinder sets} recursively as follows. For each $b \in \mathbb Z[i]$ set 
$
\Delta(b) = \big\{ z \in U \, : \, \left\lfloor \frac1z \right\rfloor_U = b\big\}
$
and let $T_{b}$ be the restriction of $T$ to $\Delta(b)$. Then, for $b_1,\ldots, b_n \in \mathbb Z[i]$, $n\ge 2$, we define 
\[
\Delta(b_1,\ldots, b_n) = T_{b_1}^{-1}(\Delta(b_2,\ldots, b_n)) \cap U.
\]
This entails that the {\em cylinder set} $\Delta(b_1,\ldots, b_n)$ is the set of points $z \in U$ that share the same first digits $a_1=b_1, \ldots, a_n=b_n$ in their Hurwitz expansion~\eqref{eq:cfintro}.
\begin{figure}[h]
\centering
\begin{subfigure}[b]{0.47\textwidth}
\centering
\begin{tikzpicture}[scale=6.2]
\tikzmath{\a1 = 0.5; \a2 = 0.5;}

\foreach \y in {2,3,...,15} {
\draw [smooth, black!50,thick, domain=0:360] plot ({1/(2*(\a1+\y))+1/(2*(\a1+\y) )*cos(\x)}, {1/(2*(\a1+\y) )*sin(\x)});
\draw [smooth, black!50,thick, domain=0:360] plot ({1/(2*(\a2 +\y))*cos(\x)}, {-1/(2*(\a2+\y))+1/(2*(\a2+\y) )*sin(\x)});
}
\draw [smooth, black!50,thick, domain=150:210] plot ({1/(2*\a1)+1/(2*\a1)*cos(\x)}, {1/(2*\a1) )*sin(\x)});
\draw [smooth, black!50,thick, domain=60:300] plot ({1/(2*(\a1+1))+1/(2*(\a1+1) )*cos(\x)}, {1/(2*(\a1+1) )*sin(\x)});

\draw [smooth, black!50,thick, domain=240:300] plot ({1/(2*(1-\a2 ))*cos(\x)}, {1/(2*(1-\a2))+1/(2*(1-\a2) )*sin(\x)});
\draw [smooth, black!50,thick, domain=150:390] plot ({1/(2*(2-\a2 ))*cos(\x)}, {1/(2*(2-\a2))+1/(2*(2-\a2) )*sin(\x)});
\foreach \y in {3,4,...,16} {
\draw [smooth, black!50,thick, domain=0:360] plot ({1/(2*(\y-\a2 ))*cos(\x)}, {1/(2*(\y-\a2))+1/(2*(\y-\a2) )*sin(\x)});
\draw [smooth, black!50,thick, domain=0:360] plot ({1/(2*(\a1-\y))+1/(2*(\y-\a1) )*cos(\x)}, {1/(2*(\y-\a1) )*sin(\x)});
}

\draw [smooth, black!50,thick, domain=330:390] plot ({1/(2*(\a1-1))+1/(2*(1-\a1) )*cos(\x)}, {1/(2*(1-\a1) )*sin(\x)});
\draw [smooth, black!50,thick, domain=240:480] plot ({1/(2*(\a1-2))+1/(2*(2-\a1) )*cos(\x)}, {1/(2*(2-\a1) )*sin(\x)});

\draw [smooth, black!50,thick, domain=60:120] plot ({1/(2*\a2 )*cos(\x)}, {-1/(2*\a2)+1/(2*\a2 )*sin(\x)});
\draw [smooth, black!50,thick, domain=570:330] plot ({1/(2*(\a2 +1))*cos(\x)}, {-1/(2*(\a2+1))+1/(2*(\a2+1) )*sin(\x)});

\filldraw[fill=black!50, draw=black!50] (\a1-1/2, \a2-1/2) circle (1.5pt);

\draw[thick,black!50!white] (\a1-1, \a2-1) -- (\a1-1, \a2) -- (\a1, \a2) -- (\a1, \a2 -1) -- cycle;

\node[font=\small, scale=0.7] at (\a1-.05,\a2-1/2) {\color{black!50} 2};
\node[font=\small, scale=0.7] at (\a1-.15,\a2-1/2) { \color{black!50} 3};
\node[font=\small, scale=0.7] at (\a1-1+.15,\a2-1/2) { \color{black!50} $-3$};
\node[font=\small, scale=0.7] at (\a1-.25,\a2-1/2) { \color{black!50} 4};
\node[font=\small, scale=0.7] at (\a1-1+.05,\a2-1/2) { \color{black!50} $-2$};
\node[font=\small, scale=0.7] at (\a1-1/2,\a2-.05) { \color{black!50} $-2i$};
\node[font=\small, scale=0.7] at (\a1-1/2,\a2-.15) { \color{black!50} $-3i$};
\node[font=\small, scale=0.7] at (\a1-1/2,\a2-1+.05) { \color{black!50} $2i$};
\node[font=\small, scale=0.7] at (\a1-1/2,\a2-1+.15) { \color{black!50} $3i$};
\node[font=\small, scale=0.7] at (.41,.43) { \color{black!50} $1-i$};
\node[font=\small, scale=0.7] at (.4,.22) { \color{black!50} $2-i$};
\node[font=\small, scale=0.7] at (.2,.43) { \color{black!50} $1-2i$};
\node[font=\small, scale=0.7] at (.25,.25) { \color{black!50} $2-2i$};

\node[font=\small, scale=0.7] at (.41,-.43) { \color{black!50} $1+i$};
\node[font=\small, scale=0.7] at (.4,-.22) { \color{black!50} $2+i$};
\node[font=\small, scale=0.7] at (.2,-.43) { \color{black!50} $1+2i$};
\node[font=\small, scale=0.7] at (.25,-.25) { \color{black!50} $2+2i$};

\node[font=\small, scale=0.7] at (-.41,-.43) { \color{black!50} $-1+i$};
\node[font=\small, scale=0.7] at (-.4,-.22) { \color{black!50} $-2+i$};
\node[font=\small, scale=0.7] at (-.2,-.43) { \color{black!50} $-1+2i$};
\node[font=\small, scale=0.7] at (-.26,-.25) { \color{black!50} $-2+2i$};

\node[font=\small, scale=0.7] at (-.41,.43) { \color{black!50} $-1-i$};
\node[font=\small, scale=0.7] at (-.4,.22) { \color{black!50} $-2-i$};
\node[font=\small, scale=0.7] at (-.2,.43) { \color{black!50} $-1-2i$};
\node[font=\small, scale=0.7] at (-.26,.25) { \color{black!50} $-2-2i$};
\end{tikzpicture}
\caption{The set $U$. The regions bounded by the circle arcs indicate the cylinder sets $[b]$ for the digits $b \in \mathbb Z[i] \setminus \{0, \pm 1, \pm i\}$. Note that we have $\tfrac1{\Delta(b)} = (U + b) \cap \frac1U$. \\[2pt]}
\end{subfigure}
\hfill
\begin{subfigure}[b]{0.48\textwidth}
         \centering
\begin{tikzpicture}[scale=.88]
\tikzmath{\a1 = 0.5; \a2 = 0.5;}

\draw[smooth, black!50!white] (\a1-4, \a2-4) -- (\a1-4, \a2+3);
\draw[smooth, black!50!white] (\a1-3, \a2-4) -- (\a1-3, \a2+3);
\draw[smooth, black!50!white] (\a1-2, \a2-4) -- (\a1-2, \a2+3);
\draw[smooth, black!50!white] (\a1-1, \a2-4) -- (\a1-1, \a2+3);
\draw[smooth, black!50!white] (\a1, \a2-4) -- (\a1, \a2+3);
\draw[smooth, black!50!white] (\a1+1, \a2-4) -- (\a1+1, \a2+3);
\draw[smooth, black!50!white] (\a1+2, \a2-4) -- (\a1+2, \a2+3);
\draw[smooth, black!50!white] (\a1+3, \a2-4) -- (\a1+3, \a2+3);

\draw[smooth, black!50!white] (\a1-4, \a2-4) -- (\a1+3, \a2-4);
\draw[smooth, black!50!white] (\a1-4, \a2-3) -- (\a1+3, \a2-3);
\draw[smooth, black!50!white] (\a1-4, \a2-2) -- (\a1+3, \a2-2);
\draw[smooth, black!50!white] (\a1-4, \a2-1) -- (\a1+3, \a2-1);
\draw[smooth, black!50!white] (\a1-4, \a2) -- (\a1+3, \a2);
\draw[smooth, black!50!white] (\a1-4, \a2+1) -- (\a1+3, \a2+1);
\draw[smooth, black!50!white] (\a1-4, \a2+2) -- (\a1+3, \a2+2);
\draw[smooth, black!50!white] (\a1-4, \a2+3) -- (\a1+3, \a2+3);

\draw [awesome,very thick, domain=270:450] plot ({1/(2*\a1)+1/(2*\a1 )*cos(\x)}, {1/(2*\a1 )*sin(\x)});
\draw[awesome,very thick] (\a1,\a2-1)--(\a1,\a2);
\draw [bondiblue,very thick, domain=0:180] plot ({1/(2*(1-\a2 ))*cos(\x)}, {1/(2*(1-\a2))+1/(2*(1-\a2) )*sin(\x)});
\draw[ufogreen, very thick] (\a1, \a2) -- (\a1-1,\a2);
\draw [cocoabrown,very thick, domain=90:270] plot ({1/(2*(\a1-1))+1/(2*(1-\a1) )*cos(\x)}, {1/(2*(1-\a1) )*sin(\x)});
\draw [cocoabrown,very thick] (\a1-1,\a2)--(\a1-1,\a2-1);
\draw [ufogreen,very thick, domain=180:360] plot ({1/(2*\a2 )*cos(\x)}, {-1/(2*\a2)+1/(2*\a2 )*sin(\x)});
\draw[bondiblue,very thick] (\a1-1,\a2-1)--(\a1,\a2-1);

\node[font=\small, scale=0.7] at (\a1-7/2,\a2-1/2) {\color{black!50!white} $-3$};
\node[font=\small, scale=0.7] at (\a1-5/2,\a2-1/2) {\color{black!50!white} $-2$};
\node[font=\small, scale=0.7] at (\a1-3/2,\a2-1/2) {\color{black!50!white} $-1$};
\node[font=\small, scale=0.7] at (\a1-1/2,\a2-1/2) {\color{black!50!white} $0$};
\node[font=\small, scale=0.7] at (\a1+1/2,\a2-1/2) {\color{black!50!white} $1$};
\node[font=\small, scale=0.7] at (\a1+3/2,\a2-1/2) {\color{black!50!white} $2$};
\node[font=\small, scale=0.7] at (\a1+5/2,\a2-1/2) {\color{black!50!white} $3$};

\node[font=\small, scale=0.7] at (\a1-7/2,\a2+1/2) {\color{black!50!white} $-3+i$};
\node[font=\small, scale=0.7] at (\a1-5/2,\a2+1/2) {\color{black!50!white} $-2+i$};
\node[font=\small, scale=0.7] at (\a1-3/2,\a2+1/2) {\color{black!50!white} $-1+i$};
\node[font=\small, scale=0.7] at (\a1-1/2,\a2+1/2) {\color{black!50!white} $i$};
\node[font=\small, scale=0.7] at (\a1+1/2,\a2+1/2) {\color{black!50!white} $1+i$};
\node[font=\small, scale=0.7] at (\a1+3/2,\a2+1/2) {\color{black!50!white} $2+i$};
\node[font=\small, scale=0.7] at (\a1+5/2,\a2+1/2) {\color{black!50!white} $3+i$};

\node[font=\small, scale=0.7] at (\a1-7/2,\a2+3/2) {\color{black!50!white} $-3+2i$};
\node[font=\small, scale=0.7] at (\a1-5/2,\a2+3/2) {\color{black!50!white} $-2+2i$};
\node[font=\small, scale=0.7] at (\a1-3/2,\a2+3/2) {\color{black!50!white} $-1+2i$};
\node[font=\small, scale=0.7] at (\a1-1/2,\a2+3/2) {\color{black!50!white} $2i$};
\node[font=\small, scale=0.7] at (\a1+1/2,\a2+3/2) {\color{black!50!white} $1+2i$};
\node[font=\small, scale=0.7] at (\a1+3/2,\a2+3/2) {\color{black!50!white} $2+2i$};
\node[font=\small, scale=0.7] at (\a1+5/2,\a2+3/2) {\color{black!50!white} $3+2i$};

\node[font=\small, scale=0.7] at (\a1-7/2,\a2+5/2) {\color{black!50!white} $-3+3i$};
\node[font=\small, scale=0.7] at (\a1-5/2,\a2+5/2) {\color{black!50!white} $-2+3i$};
\node[font=\small, scale=0.7] at (\a1-3/2,\a2+5/2) {\color{black!50!white} $-1+3i$};
\node[font=\small, scale=0.7] at (\a1-1/2,\a2+5/2) {\color{black!50!white} $3i$};
\node[font=\small, scale=0.7] at (\a1+1/2,\a2+5/2) {\color{black!50!white} $1+3i$};
\node[font=\small, scale=0.7] at (\a1+3/2,\a2+5/2) {\color{black!50!white} $2+3i$};
\node[font=\small, scale=0.7] at (\a1+5/2,\a2+5/2) {\color{black!50!white} $3+3i$};

\node[font=\small, scale=0.7] at (\a1-7/2,\a2-3/2) {\color{black!50!white} $-3-i$};
\node[font=\small, scale=0.7] at (\a1-5/2,\a2-3/2) {\color{black!50!white} $-2-i$};
\node[font=\small, scale=0.7] at (\a1-3/2,\a2-3/2) {\color{black!50!white} $-1-i$};
\node[font=\small, scale=0.7] at (\a1-1/2,\a2-3/2) {\color{black!50!white} $-i$};
\node[font=\small, scale=0.7] at (\a1+1/2,\a2-3/2) {\color{black!50!white} $1-i$};
\node[font=\small, scale=0.7] at (\a1+3/2,\a2-3/2) {\color{black!50!white} $2-i$};
\node[font=\small, scale=0.7] at (\a1+5/2,\a2-3/2) {\color{black!50!white} $3-i$};

\node[font=\small, scale=0.7] at (\a1-7/2,\a2-5/2) {\color{black!50!white} $-3-2i$};
\node[font=\small, scale=0.7] at (\a1-5/2,\a2-5/2) {\color{black!50!white} $-2-2i$};
\node[font=\small, scale=0.7] at (\a1-3/2,\a2-5/2) {\color{black!50!white} $-1-2i$};
\node[font=\small, scale=0.7] at (\a1-1/2,\a2-5/2) {\color{black!50!white} $-2i$};
\node[font=\small, scale=0.7] at (\a1+1/2,\a2-5/2) {\color{black!50!white} $1-2i$};
\node[font=\small, scale=0.7] at (\a1+3/2,\a2-5/2) {\color{black!50!white} $2-2i$};
\node[font=\small, scale=0.7] at (\a1+5/2,\a2-5/2) {\color{black!50!white} $3-2i$};

\node[font=\small, scale=0.7] at (\a1-7/2,\a2-7/2) {\color{black!50!white} $-3-3i$};
\node[font=\small, scale=0.7] at (\a1-5/2,\a2-7/2) {\color{black!50!white} $-2-3i$};
\node[font=\small, scale=0.7] at (\a1-3/2,\a2-7/2) {\color{black!50!white} $-1-3i$};
\node[font=\small, scale=0.7] at (\a1-1/2,\a2-7/2) {\color{black!50!white} $-3i$};
\node[font=\small, scale=0.7] at (\a1+1/2,\a2-7/2) {\color{black!50!white} $1-3i$};
\node[font=\small, scale=0.7] at (\a1+3/2,\a2-7/2) {\color{black!50!white} $2-3i$};
\node[font=\small, scale=0.7] at (\a1+5/2,\a2-7/2) {\color{black!50!white} $3-3i$};
\end{tikzpicture}
\caption{The set $U$ is the central square labeled 0. The colored curves indicate the images under the map $z \mapsto \frac1z$ of the part of the boundary $\partial U$ with the same color. $\frac1U$ is outside the ``cloverleaf''.\\[-3.3mm]}
\end{subfigure}
\caption{For each $b \in \mathbb Z[i]\setminus\{0,\pm1, \pm i\}$ the map $T$ first maps the cylinder set $\Delta(b)$ in (A) to the region in (B) with the label $b$ and then shifts this region back to $U$ by subtracting the digit $b$.}
\label{fig:hurmap}
\end{figure}
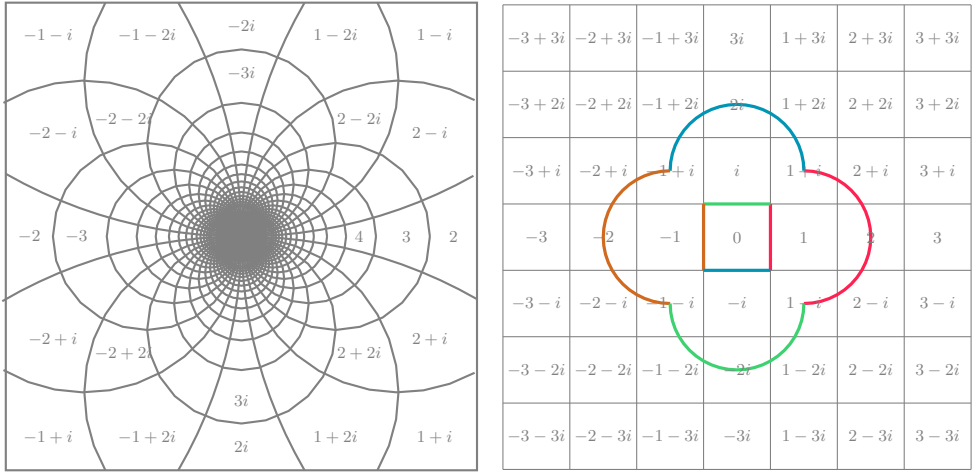
Figure~\ref{fig:hurmap} shows how $T$ acts on the sets $\Delta(b)$ for $b \in \mathbb Z[i] \setminus \{0,\pm1, \pm i\}$. 

For the Hurwitz algorithm not all infinite strings $a_1a_2\cdots$ of Gaussian integers $\mathbb Z[i]\setminus\{0,\pm1, \pm i\}$ occur as Hurwitz expansions. For example, as can be seen from Figure~\ref{fig:hurmap}, the pattern $a_na_{n+1}=2(-3)$ does not occur. As in the case of Zeckendorf expansions and the nearest integer algorithm, we need further restrictions to characterize the digit strings of Hurwitz continued fraction expansions. These restrictions are again of a simple type. Indeed, the collection 
\[
\{T^n(\Delta(b_1,\ldots,b_n)) \;:\; b_1,\ldots, b_n \in \mathbb Z[i],\; n\ge 2\}
\]
contains only finitely many sets up to Lebesgue null sets, in other words, the Hurwitz algorithm satisfies the finite range property. According to \cite[Lemma~3.6]{LV24} the finite range property is equivalent to the fact that the Hurwitz map admits a {\it finite partition} in the following sense. A collection $\mathcal P$ of Borel measurable subsets of $U$ is a {\em partition} of $U$ if the sets from $\mathcal P$ are pairwise disjoint and $U = \bigcup_{P \in \mathcal P} P$ up to sets of zero Lebesgue measure. A finite partition $\mathcal P$ of $U$ is called a {\it finite partition for the map $T$} if for any $b \in \mathbb Z[i] \setminus \{0,\pm1, \pm i\}$ and any $P \in \mathcal P$ the set $T(\Delta(b)\cap P)$ is a union of elements from $\mathcal P$, again up to sets of zero Lebesgue measure. The finite partition for the Hurwitz map $T$ is shown in Figure~\ref{fig:markov1/2,1/2} (see \cite[Section~2]{Ei2018})\footnote{By carefully inspecting the mapping properties of $T$ illustrated in Figure~\ref{fig:hurmap}, one easily sees that the partition in Figure~\ref{fig:markov1/2,1/2} is indeed a finite partition for $T$.}. 

In 1979 Jeffrey Shallit \cite[Section II.3]{Sha79} studied a variant of the Hurwitz algorithm by using a different ``floor function''. In particular, in \cite[Theorem II.3.3]{Sha79} the possible digit strings of this algorithm are characterized by a finite automaton.

\section{A Generalization: The $\balpha$-Hurwitz algorithm}\label{sec:alpha}

We are now ready to introduce the main object of our paper. We consider perturbations of the Hurwitz map. To be precise, we make the mapping $T$ defined in \eqref{eq:mapT} parameter dependent by shifting the domain $U$. For ${\balpha}=(\alpha_1,\alpha_2) \in \mathbb R^2$  set
\[
U_{\balpha} = \big\{z \in \C \;:\;  \alpha_1-1 \le \Re z < \alpha_1,\, \alpha_2-1 \le  \Im z < \alpha_2  \big\}
\]
and define the function 
$
\lfloor \cdot \rfloor_{\balpha} : \C \to U_{\balpha};\,  z  \mapsto z-w$, where $w\in\Z[i]$ is the unique Gaussian integer satisfying $z-w\in U_{\balpha}$. The {\it $\balpha$-Hurwitz map} is then given by
\[ 
T_{\balpha} : U_{\balpha}\setminus\{0\} \to U_{\balpha}; \quad z \mapsto \frac1z - \Big\lfloor \frac1z \Big\rfloor_{\balpha}.
\]
\begin{figure}[h]
\centering

\begin{subfigure}[b]{0.45\textwidth}
         \centering
\begin{tikzpicture}[scale=6.2]
\tikzmath{\a1 = 0.45; \a2 = 0.6;}

\foreach \y in {2,3,...,15} {
\draw [smooth, black!50,thick, domain=0:360] plot ({1/(2*(\a1+\y))+1/(2*(\a1+\y) )*cos(\x)}, {1/(2*(\a1+\y) )*sin(\x)});
\draw [smooth, black!50,thick, domain=0:360] plot ({1/(2*(\a2 +\y))*cos(\x)}, {-1/(2*(\a2+\y))+1/(2*(\a2+\y) )*sin(\x)});
}
\draw [smooth, black!50,thick, domain=147.3:201] plot ({1/(2*\a1)+1/(2*\a1)*cos(\x)}, {1/(2*\a1) )*sin(\x)});
\draw [smooth, black!50,thick, domain=72:288] plot ({1/(2*(\a1+1))+1/(2*(\a1+1) )*cos(\x)}, {1/(2*(\a1+1) )*sin(\x)});

\draw [smooth, black!50,thick, domain=244:291] plot ({1/(2*(1-\a2 ))*cos(\x)}, {1/(2*(1-\a2))+1/(2*(1-\a2) )*sin(\x)});
\draw [smooth, black!50,thick, domain=137:403] plot ({1/(2*(2-\a2 ))*cos(\x)}, {1/(2*(2-\a2))+1/(2*(2-\a2) )*sin(\x)});
\foreach \y in {3,4,...,16} {
\draw [smooth, black!50,thick, domain=0:360] plot ({1/(2*(\y-\a2 ))*cos(\x)}, {1/(2*(\y-\a2))+1/(2*(\y-\a2) )*sin(\x)});
\draw [smooth, black!50,thick, domain=0:360] plot ({1/(2*(\a1-\y))+1/(2*(\y-\a1) )*cos(\x)}, {1/(2*(\y-\a1) )*sin(\x)});
}

\draw [smooth, black!50,thick, domain=334:401.4] plot ({1/(2*(\a1-1))+1/(2*(1-\a1) )*cos(\x)}, {1/(2*(1-\a1) )*sin(\x)});
\draw [smooth, black!50,thick, domain=225:495] plot ({1/(2*(\a1-2))+1/(2*(2-\a1) )*cos(\x)}, {1/(2*(2-\a1) )*sin(\x)});

\draw [smooth, black!50,thick, domain=57.4:131.4] plot ({1/(2*\a2 )*cos(\x)}, {-1/(2*\a2)+1/(2*\a2 )*sin(\x)});
\draw [smooth, black!50,thick, domain=556:344] plot ({1/(2*(\a2 +1))*cos(\x)}, {-1/(2*(\a2+1))+1/(2*(\a2+1) )*sin(\x)});

\filldraw[fill=black!50, draw=black!50] (0, 0) circle (1.5pt);

\draw[thick,black!50!white] (\a1-1, \a2-1) -- (\a1-1, \a2) -- (\a1, \a2) -- (\a1, \a2 -1) -- cycle;

\node[font=\small, scale=0.7] at (\a1-.025,0) {\color{black!50} 2};
\node[font=\small, scale=0.7] at (\a1-.115,0) { \color{black!50} 3};
\node[font=\small, scale=0.7] at (\a1-1+.21,0) { \color{black!50} $-3$};
\node[font=\small, scale=0.7] at (\a1-.195,0) { \color{black!50} 4};
\node[font=\small, scale=0.7] at (\a1-1+.08,0) { \color{black!50} $-2$};
\node[font=\small, scale=0.7] at (-.02,\a2-.08) { \color{black!50} $-2i$};
\node[font=\small, scale=0.7] at (-.02,\a2-.25) { \color{black!50} $-3i$};
\node[font=\small, scale=0.7] at (0,\a2-1+.08) { \color{black!50} $3i$};
\node[font=\small, scale=0.7] at (.4,.45) { \color{black!50} $1-i$};
\node[font=\small, scale=0.7] at (.4,.22) { \color{black!50} $2-i$};
\node[font=\small, scale=0.7] at (.21,.45) { \color{black!50} $1-2i$};
\node[font=\small, scale=0.7] at (.27,.25) { \color{black!50} $2-2i$};

\node[font=\small, scale=0.7] at (.38,-.37) { \color{black!50} $1+i$};
\node[font=\small, scale=0.7] at (.38,-.22) { \color{black!50} $2+i$};
\node[font=\small, scale=0.7] at (.2,-.37) { \color{black!50} $1+2i$};
\node[font=\small, scale=0.7] at (.24,-.26) { \color{black!50} $2+2i$};

\node[font=\small, scale=0.7] at (-.43,-.37) { \color{black!50} $-1+i$};
\node[font=\small, scale=0.7] at (-.4,-.22) { \color{black!50} $-2+i$};
\node[font=\small, scale=0.7] at (-.2,-.37) { \color{black!50} $-1+2i$};
\node[font=\small, scale=0.7] at (-.25,-.26) { \color{black!50} $-2+2i$};

\node[font=\small, scale=0.7] at (-.46,.45) { \color{black!50} $-1-i$};
\node[font=\small, scale=0.7] at (-.42,.22) { \color{black!50} $-2-i$};
\node[font=\small, scale=0.7] at (-.23,.45) { \color{black!50} $-1-2i$};
\node[font=\small, scale=0.7] at (-.27,.25) { \color{black!50} $-2-2i$};
\end{tikzpicture}
\caption{}
\end{subfigure}
\hfill
\begin{subfigure}[b]{0.45\textwidth}
\centering
\begin{tikzpicture}[scale=.88]
\tikzmath{\a1 = 0.45; \a2 = 0.6;}

\draw[smooth, black!50!white] (\a1-4, \a2-4) -- (\a1-4, \a2+3);
\draw[smooth, black!50!white] (\a1-3, \a2-4) -- (\a1-3, \a2+3);
\draw[smooth, black!50!white] (\a1-2, \a2-4) -- (\a1-2, \a2+3);
\draw[smooth, black!50!white] (\a1-1, \a2-4) -- (\a1-1, \a2+3);
\draw[smooth, black!50!white] (\a1, \a2-4) -- (\a1, \a2+3);
\draw[smooth, black!50!white] (\a1+1, \a2-4) -- (\a1+1, \a2+3);
\draw[smooth, black!50!white] (\a1+2, \a2-4) -- (\a1+2, \a2+3);
\draw[smooth, black!50!white] (\a1+3, \a2-4) -- (\a1+3, \a2+3);

\draw[smooth, black!50!white] (\a1-4, \a2-4) -- (\a1+3, \a2-4);
\draw[smooth, black!50!white] (\a1-4, \a2-3) -- (\a1+3, \a2-3);
\draw[smooth, black!50!white] (\a1-4, \a2-2) -- (\a1+3, \a2-2);
\draw[smooth, black!50!white] (\a1-4, \a2-1) -- (\a1+3, \a2-1);
\draw[smooth, black!50!white] (\a1-4, \a2) -- (\a1+3, \a2);
\draw[smooth, black!50!white] (\a1-4, \a2+1) -- (\a1+3, \a2+1);
\draw[smooth, black!50!white] (\a1-4, \a2+2) -- (\a1+3, \a2+2);
\draw[smooth, black!50!white] (\a1-4, \a2+3) -- (\a1+3, \a2+3);

\draw [awesome,very thick, domain=251:445] plot ({1/(2*\a1)+1/(2*\a1 )*cos(\x)}, {1/(2*\a1 )*sin(\x)});
\draw[awesome,very thick] (\a1,\a2-1)--(\a1,\a2);
\draw [bondiblue,very thick, domain=-9:200] plot ({1/(2*(1-\a2 ))*cos(\x)}, {1/(2*(1-\a2))+1/(2*(1-\a2) )*sin(\x)});
\draw[ufogreen, very thick] (\a1, \a2) -- (\a1-1,\a2);
\draw [cocoabrown,very thick, domain=110:273] plot ({1/(2*(\a1-1))+1/(2*(1-\a1) )*cos(\x)}, {1/(2*(1-\a1) )*sin(\x)});
\draw [cocoabrown,very thick] (\a1-1,\a2)--(\a1-1,\a2-1);
\draw [ufogreen,very thick, domain=182:342] plot ({1/(2*\a2 )*cos(\x)}, {-1/(2*\a2)+1/(2*\a2 )*sin(\x)});
\draw[bondiblue,very thick] (\a1-1,\a2-1)--(\a1,\a2-1);

\node[font=\small, scale=0.7] at (\a1-7/2,\a2-1/2) {\color{black!50!white} $-3$};
\node[font=\small, scale=0.7] at (\a1-5/2,\a2-1/2) {\color{black!50!white} $-2$};
\node[font=\small, scale=0.7] at (\a1-3/2,\a2-1/2) {\color{black!50!white} $-1$};
\node[font=\small, scale=0.7] at (\a1-1/2,\a2-1/2) {\color{black!50!white} $0$};
\node[font=\small, scale=0.7] at (\a1+1/2,\a2-1/2) {\color{black!50!white} $1$};
\node[font=\small, scale=0.7] at (\a1+3/2,\a2-1/2) {\color{black!50!white} $2$};
\node[font=\small, scale=0.7] at (\a1+5/2,\a2-1/2) {\color{black!50!white} $3$};

\node[font=\small, scale=0.7] at (\a1-7/2,\a2+1/2) {\color{black!50!white} $-3+i$};
\node[font=\small, scale=0.7] at (\a1-5/2,\a2+1/2) {\color{black!50!white} $-2+i$};
\node[font=\small, scale=0.7] at (\a1-3/2,\a2+1/2) {\color{black!50!white} $-1+i$};
\node[font=\small, scale=0.7] at (\a1-1/2,\a2+1/2) {\color{black!50!white} $i$};
\node[font=\small, scale=0.7] at (\a1+1/2,\a2+1/2) {\color{black!50!white} $1+i$};
\node[font=\small, scale=0.7] at (\a1+3/2,\a2+1/2) {\color{black!50!white} $2+i$};
\node[font=\small, scale=0.7] at (\a1+5/2,\a2+1/2) {\color{black!50!white} $3+i$};

\node[font=\small, scale=0.7] at (\a1-7/2,\a2+3/2) {\color{black!50!white} $-3+2i$};
\node[font=\small, scale=0.7] at (\a1-5/2,\a2+3/2) {\color{black!50!white} $-2+2i$};
\node[font=\small, scale=0.7] at (\a1-3/2,\a2+3/2) {\color{black!50!white} $-1+2i$};
\node[font=\small, scale=0.7] at (\a1-1/2,\a2+3/2) {\color{black!50!white} $2i$};
\node[font=\small, scale=0.7] at (\a1+1/2,\a2+3/2) {\color{black!50!white} $1+2i$};
\node[font=\small, scale=0.7] at (\a1+3/2,\a2+3/2) {\color{black!50!white} $2+2i$};
\node[font=\small, scale=0.7] at (\a1+5/2,\a2+3/2) {\color{black!50!white} $3+2i$};

\node[font=\small, scale=0.7] at (\a1-7/2,\a2+5/2) {\color{black!50!white} $-3+3i$};
\node[font=\small, scale=0.7] at (\a1-5/2,\a2+5/2) {\color{black!50!white} $-2+3i$};
\node[font=\small, scale=0.7] at (\a1-3/2,\a2+5/2) {\color{black!50!white} $-1+3i$};
\node[font=\small, scale=0.7] at (\a1-1/2,\a2+5/2) {\color{black!50!white} $3i$};
\node[font=\small, scale=0.7] at (\a1+1/2,\a2+5/2) {\color{black!50!white} $1+3i$};
\node[font=\small, scale=0.7] at (\a1+3/2,\a2+5/2) {\color{black!50!white} $2+3i$};
\node[font=\small, scale=0.7] at (\a1+5/2,\a2+5/2) {\color{black!50!white} $3+3i$};

\node[font=\small, scale=0.7] at (\a1-7/2,\a2-3/2) {\color{black!50!white} $-3-i$};
\node[font=\small, scale=0.7] at (\a1-5/2,\a2-3/2) {\color{black!50!white} $-2-i$};
\node[font=\small, scale=0.7] at (\a1-3/2,\a2-3/2) {\color{black!50!white} $-1-i$};
\node[font=\small, scale=0.7] at (\a1-1/2,\a2-3/2) {\color{black!50!white} $-i$};
\node[font=\small, scale=0.7] at (\a1+1/2,\a2-3/2) {\color{black!50!white} $1-i$};
\node[font=\small, scale=0.7] at (\a1+3/2,\a2-3/2) {\color{black!50!white} $2-i$};
\node[font=\small, scale=0.7] at (\a1+5/2,\a2-3/2) {\color{black!50!white} $3-i$};

\node[font=\small, scale=0.7] at (\a1-7/2,\a2-5/2) {\color{black!50!white} $-3-2i$};
\node[font=\small, scale=0.7] at (\a1-5/2,\a2-5/2) {\color{black!50!white} $-2-2i$};
\node[font=\small, scale=0.7] at (\a1-3/2,\a2-5/2) {\color{black!50!white} $-1-2i$};
\node[font=\small, scale=0.7] at (\a1-1/2,\a2-5/2) {\color{black!50!white} $-2i$};
\node[font=\small, scale=0.7] at (\a1+1/2,\a2-5/2) {\color{black!50!white} $1-2i$};
\node[font=\small, scale=0.7] at (\a1+3/2,\a2-5/2) {\color{black!50!white} $2-2i$};
\node[font=\small, scale=0.7] at (\a1+5/2,\a2-5/2) {\color{black!50!white} $3-2i$};

\node[font=\small, scale=0.7] at (\a1-7/2,\a2-7/2) {\color{black!50!white} $-3-3i$};
\node[font=\small, scale=0.7] at (\a1-5/2,\a2-7/2) {\color{black!50!white} $-2-3i$};
\node[font=\small, scale=0.7] at (\a1-3/2,\a2-7/2) {\color{black!50!white} $-1-3i$};
\node[font=\small, scale=0.7] at (\a1-1/2,\a2-7/2) {\color{black!50!white} $-3i$};
\node[font=\small, scale=0.7] at (\a1+1/2,\a2-7/2) {\color{black!50!white} $1-3i$};
\node[font=\small, scale=0.7] at (\a1+3/2,\a2-7/2) {\color{black!50!white} $2-3i$};
\node[font=\small, scale=0.7] at (\a1+5/2,\a2-7/2) {\color{black!50!white} $3-3i$};
\end{tikzpicture}
\caption{}
\end{subfigure}
\caption{The analog of Figure~\ref{fig:hurmap} for $T_\balpha$ with $\balpha=(\tfrac{9}{20}, \tfrac{3}{5})$: The cloverleaf turns into an asymmetric butterfly.} \label{fig:smallalpha}
\end{figure}

\noindent Let $z \in U_{\balpha}$. For each $n \in\mathbb{N}$, such that the iterates $T_{\balpha}^{k-1}(z)$ do not vanish for $k\in\{1,\ldots,n\}$, we set $a_k = \Big\lfloor \frac1{T_{\balpha}^{k-1}(z)} \Big\rfloor_{\balpha} \in \mathbb Z[i]$ and gain 
\begin{equation}\label{q:finitefrac}
z = \cfrac1{a_1 + \cfrac1{a_2 + \cfrac1{\ddots + \cfrac1{a_n + T^n_{\balpha}(z)}}}}.
\end{equation}
If $T_{\balpha}^n(z)$ vanishes, then \eqref{q:finitefrac} becomes a finite {\it $\balpha$-Hurwitz expansion} of $z$. If $T_{\balpha}^n(z)$ does not vanish for any $n\ge 0$ we can consider the limit $[a_1,a_2,\ldots]_{\balpha}$ of the right hand side of \eqref{q:finitefrac} for $n\to\infty$.  This is the {\it $\balpha$-Hurwitz algorithm} (which agrees with the Hurwitz algorithm from Section~\ref{sec:hurwitz} for $\balpha=(\frac12,\frac12)$). However, we have to be careful. {\it A priori} it is not clear if $[a_1,a_2,\ldots]_{\balpha}$ represents $z$ or if this limit exists at all. Indeed, $z=[a_1,a_2,\ldots]_{\balpha}$ only holds for particular parameters $\balpha$. To characterize this set of parameters we need some notation. Let $B_{\varepsilon}(a,b) \subseteq \R^2$ be the open ball with center $(a,b)$ and radius $\varepsilon$, and for a set $B \subseteq \R^2$ use $\overline B$ to denote its closure. Define the set
\[
\mathcal D = B_1(0,0) \cap \overline{B_1(1,0)} \cap \overline{B_1(0,1)} \cap \overline{B_1(1,1)},
\]
which is illustrated in Figure~\ref{fig:admissible}. We have the following result.

\begin{lemma}\label{lem:D}
Let $\balpha \in \mathbb{R}^2$ be given. Then the following assertions are equivalent.
\begin{itemize}
\item $\balpha\in\mathcal{D}$.
\item For each $z\in U_{\balpha}$ satisfying $T_\balpha^n(z)\neq 0$ for each $n\in \mathbb{N}$, the $\balpha$-Hurwitz algorithm is convergent and $z = [a_1, a_2, \ldots]_{\balpha}$.
\end{itemize}
\end{lemma}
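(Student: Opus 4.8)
The plan is to prove both implications by analyzing the convergents $p_n/q_n$ of the $\balpha$-Hurwitz expansion directly, using the classical recursion $p_n = a_n p_{n-1} + p_{n-2}$, $q_n = a_n q_{n-1} + q_{n-2}$ together with the matrix identity $q_n z - p_n = (-1)^n \prod_{k=1}^n T_\balpha^{k-1}(z)$, which follows by iterating \eqref{q:finitefrac}. Thus $|z - p_n/q_n| = \prod_{k=1}^n |T_\balpha^{k-1}(z)| / |q_n|$, and convergence $z = [a_1,a_2,\ldots]_\balpha$ is equivalent to $\prod_{k=1}^n |T_\balpha^{k-1}(z)| / |q_n| \to 0$. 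So everything reduces to comparing the growth of $|q_n|$ with the shrinking of the product of the iterates.

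For the direction $\balpha \in \mathcal D \Rightarrow$ convergence, the key geometric input is what the condition $\balpha \in \mathcal D$ says about $U_\balpha$ and its image under inversion. First I would observe that $|a_n| = |\lfloor 1/T_\balpha^{n-1}(z)\rfloor_\balpha|$ is bounded below in a way that makes $|q_n|$ grow. More precisely, the condition $\balpha\in\mathcal D$ is exactly the condition that $U_\balpha \subseteq B_1(0,0)$ while each of the three shifted balls $\overline{B_1(c)}$ for $c\in\{1,i,1+i\}$ contains $U_\balpha$; geometrically this means that for $z\in U_\balpha\setminus\{0\}$ the point $1/z$ lies outside all four unit balls around the lattice points nearest to $0$, hence the nearest Gaussian integer $a=\lfloor 1/z\rfloor_\balpha$ satisfies $|1/z - a| < 1$, i.e.\ $|T_\balpha(z)| < 1$, and moreover $|a|\ge \sqrt 2$ (the digit set avoids $0,\pm1,\pm i$). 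Combining $|T_\balpha^{k-1}(z)|<1$ for all $k$ with a lower bound on how fast $|q_n|$ grows — using $|q_n| \ge |a_n||q_{n-1}| - |q_{n-2}|$ and the fact that $|a_n|\ge\sqrt2 > $ something — one gets that $|q_n|\to\infty$ at a geometric rate while the numerator stays bounded, forcing $|z-p_n/q_n|\to 0$. The cleanest route is probably to show $\prod_{k=1}^n|T_\balpha^{k-1}(z)|$ is bounded (in fact $<1$) and $|q_n|\to\infty$ separately; the latter is the Gaussian-integer analog of the standard estimate $q_n \ge F_n$, and one must be a little careful because $|a_n|$ can be as small as $\sqrt2$, so a direct Fibonacci-style induction on $|q_n|$ needs the two-term recursion handled via $|q_n|^2 - |q_{n-1}|^2$ or a similar monovariant. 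I expect this growth estimate for $|q_n|$ to be the main technical obstacle, precisely because $|a_n| = \sqrt2$ is possible and $q_n$ is complex, so the real-variable argument $q_n > q_{n-1}$ does not transfer verbatim.

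For the converse, $\balpha \notin \mathcal D \Rightarrow$ non-convergence for some $z$, I would argue by producing a point $z\in U_\balpha$ whose orbit $T_\balpha^{k}(z)$ does not shrink, so that $|q_n z - p_n|$ does not tend to $0$. If $\balpha\notin B_1(0,0)$, then $U_\balpha$ itself is not contained in the open unit ball, so there is $z\in U_\balpha$ with $|z|\ge 1$ and in fact a whole region near a corner of $U_\balpha$ where $|z|>1$; iterating, one can keep the orbit in such a region, or more simply note that if $|z|\ge1$ then $1/z$ has modulus $\le1$ and lies in $U_\balpha$ already (or close to it), which obstructs the digit from being large enough, and $|q_n z-p_n|=\prod|T^{k-1}_\balpha(z)|$ fails to vanish. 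Symmetrically, if one of the closed balls $\overline{B_1(c)}$ fails to contain $U_\balpha$ for some $c\in\{1,i,1+i\}$, there is $z\in U_\balpha$ with $|z-c|>1$; then the inversion $w=1/z$ is such that $w$ is not in the union of unit balls guaranteeing $|T_\balpha(z)|<1$ — concretely one shows $|T_\balpha(z)|\ge1$ can recur along the orbit. I would then invoke the identity above: a persistent lower bound $|T_\balpha^{k-1}(z)|\ge 1$ along a subsequence gives $\liminf |q_n z - p_n| > 0$, so $p_n/q_n\not\to z$. Making the orbit \emph{stay} in the bad region (rather than just start there) is the delicate point of the converse; a soft way around it is to use that the bad region has positive measure and $T_\balpha$ is (piecewise) measure-theoretically recurrent, or, more elementarily, to exhibit an explicit periodic or eventually-well-behaved point — which is likely how the authors proceed, given their emphasis on elementary methods.
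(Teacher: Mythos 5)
There is a genuine gap here — in fact several, and they sit exactly where the mathematical content of the lemma lies. Note first that the paper does not prove this statement at all: it is imported wholesale from \cite[Theorem~2.2]{dani2014continued}, a substantial theorem with a multi-page proof. Your framework (convergents, the identity $q_nz-p_n=(-1)^n\prod_{k=0}^{n}T_{\balpha}^k(z)$ — you drop the factor $T_{\balpha}^n(z)$, but that is a harmless indexing slip) is the right starting point, but every step that actually uses the definition of $\mathcal D$ is either wrong or missing.

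Concretely: your geometric reading of $\mathcal D$ is incorrect. The four conditions constrain the four \emph{corners} $(\alpha_1,\alpha_2)$, $(\alpha_1-1,\alpha_2)$, $(\alpha_1,\alpha_2-1)$, $(\alpha_1-1,\alpha_2-1)$ of $\overline{U_{\balpha}}$ to lie in the \emph{single} closed unit disk centred at $0$ (strictly for the first corner); they do not say that $U_{\balpha}$ is contained in the four unit disks around $0,1,i,1+i$. The two consequences you draw are therefore false for general $\balpha\in\mathcal D$. (i) The digit set need not avoid $\pm1,\pm i$: for $\balpha=(0.7,0.5)\in\mathcal D$ and $z=0.6$ one has $1/z=1.6\overline{6}$ and $\lfloor 1/z\rfloor_{\balpha}=1$, so $|a_n|\ge\sqrt2$ fails and your growth argument for $|q_n|$ collapses. (ii) One can have $\sup_{w\in U_{\balpha}}|w|=1$, e.g.\ for $\balpha=(\tfrac12,\,1-\tfrac{\sqrt3}{2})\in\mathcal D$, whose included corner $-\tfrac12-\tfrac{\sqrt3}{2}i$ has modulus $1$; hence only $|T_{\balpha}^k(z)|\le 1$ is available, the strict inequality $|T_{\balpha}(z)|<1$ is not, and the product $\prod_k|T_{\balpha}^k(z)|$ need not tend to $0$. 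Beyond this, you yourself label the growth of $|q_n|$ ``the main technical obstacle'' and then leave it unproved — but that estimate, together with the boundary cases in (ii), \emph{is} the proof of the forward direction. The converse is likewise only gestured at: Poincar\'e recurrence needs an invariant measure you have not produced, and the explicit non-converging orbit for $\balpha\notin\mathcal D$ is never exhibited. So while the skeleton is sensible, nothing that distinguishes $\mathcal D$ from a slightly larger or smaller parameter region has actually been established; this is precisely why the authors cite \cite{dani2014continued} rather than reprove the result.
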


\begin{proof}
This is an immediate consequence of \cite[Theorem 2.2]{dani2014continued} (see also \cite{cijsouw2015complex} and \cite[Section 5.2]{mastersthesis}). 
\end{proof}

Because of Lemma~\ref{lem:D} we will confine ourselves to parameters $\balpha\in\mathcal{D}$. 
\begin{figure}[h]
         \centering
         \includegraphics[width=.3\textwidth]{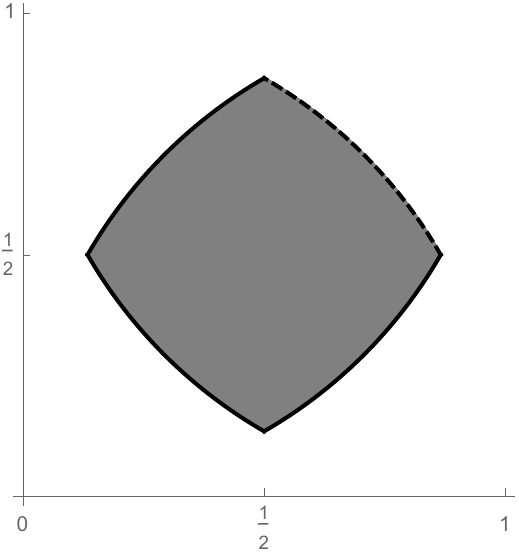}
 	\caption{The region $\mathcal D$ of parameters $\balpha$ for which $T_{\balpha}$ yields continued fraction expansions.}
  \label{fig:admissible}
\end{figure}

Fix $\balpha\in\mathcal{D}$. As we did for the Hurwitz map, we define cylinder sets for the $\balpha$-Hurwitz map $T_{\balpha}$. For each $b \in \mathbb Z[i]$ set 
$
\Delta_{\balpha}(b) = \big\{ z \in U_{\balpha} \, : \, \left\lfloor \frac1z \right\rfloor_{\balpha} = b\big\}
$
and let $T_{\balpha,b}$ be the restriction of $T_{\balpha}$ to $\Delta_{\balpha}(b)$. Then, for $b_1,\ldots, b_n \in \mathbb Z[i]$, $n\ge 2$, we define 
\[
\Delta_{\balpha}(b_1,\ldots, b_n) = T_{\balpha,b_1}^{-1}(\Delta_{\balpha}(b_2,\ldots, b_n)) \cap U_{\balpha}.
\]
Following \cite{Yur95} we say that the $\balpha$-Hurwitz algorithm satisfies the {\em finite range condition} if the collection
\[
\{T_{\balpha}^n(\Delta_{\balpha}(b_1,\ldots,b_n)) \;:\; b_1,\ldots, b_n \in \mathbb Z[i],\; n\ge 2\}
\]
contains only finitely many sets up to Lebesgue null sets. 
As mentioned before, by \cite[Lemma~3.6]{LV24} also for these ${\balpha}$-Hurwitz algorithms the finite range condition is equivalent to the fact that there exists a finite partition $\mathcal P$ of $U_{\balpha}$ for the map $T_{\balpha}$ in the following sense. For any $b \in \mathbb Z[i]$ for which $\Delta_{\balpha}(b) \neq \emptyset$ and any $P \in \mathcal P$ the set $T_{\balpha}(\Delta_{\balpha}(b)\cap P)$ is a union of elements from $\mathcal P$ up to sets of zero Lebesgue measure. In~\cite{LV23,LV24} the authors asked whether it is possible to identify sets of parameters $\balpha$ for which $T_{\balpha}$ admits a finite partition\footnote{In \cite{LV24} a map $T_{\balpha}$ with this property is called {\em serendipitous}.}. Numerical computations suggested that $T_{\balpha}$ admits a finite partition at least for each $\balpha\in\mathbb{Q}^2$. Figures~\ref{fig:markov1/2,1/2} and~\ref{fig:markov} provide some examples. In~\cite{mastersthesis}, a finite partition for $\balpha =\left(\frac{1}{3},\frac{1}{2}\right)$ is constructed in full detail. In this article we establish the following general result.

\begin{theorem}\label{t:mp}
Let $p,q,r,s \in \mathbb N$ be such that $(\frac{p}{q}, \frac{r}{s}) \in \mathcal D$. For $\balpha = (\frac{p}{q}, \frac{r}{s})$ the $\balpha$-Hurwitz algorithm satisfies the finite range condition. Equivalently, the map $T_{\balpha}$ admits a finite partition of $U_{\balpha}$.
\end{theorem}


\begin{figure}[h]
     \centering
     \begin{subfigure}[b]{0.3\textwidth}
         \centering
         \includegraphics[width=\textwidth]{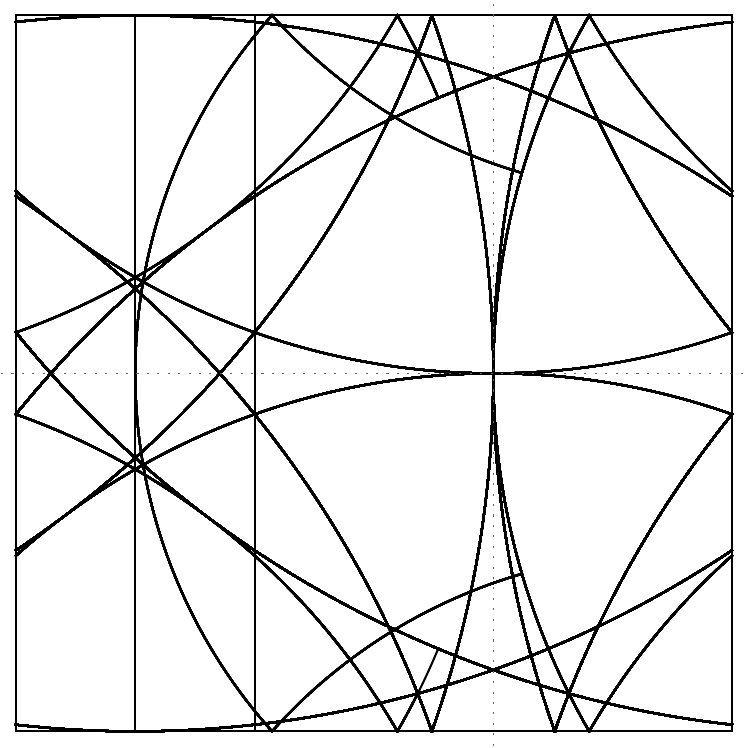}
         \caption{$\balpha=(\frac23,\frac12)$.} \label{fig:markov2/3,1/2}
     \end{subfigure}
     \hfill
     \begin{subfigure}[b]{0.3\textwidth}
         \centering
         \includegraphics[width=\textwidth]{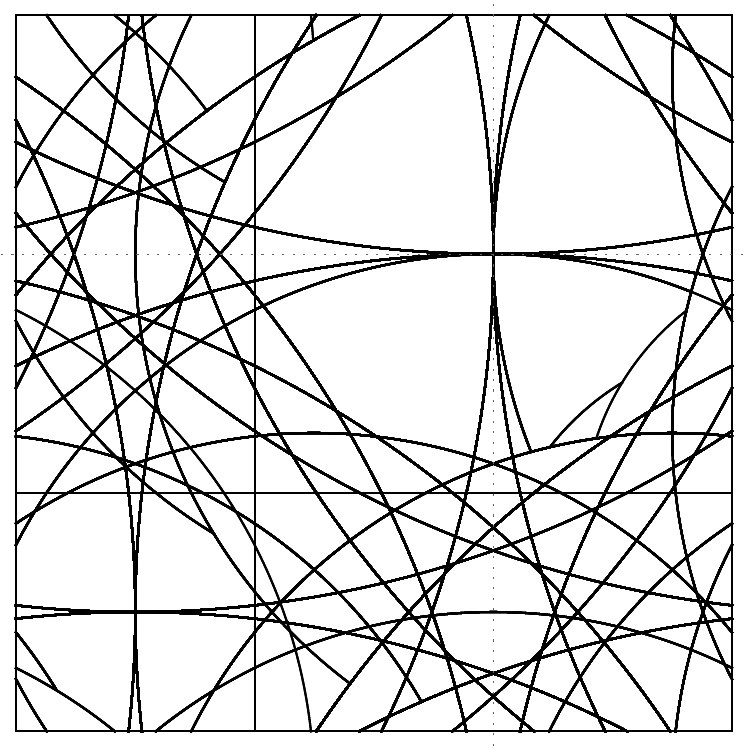}
         \caption{$\balpha=(\frac23,\frac23)$.} \label{fig:markov2/3,2/3}
     \end{subfigure}
     \hfill
     \begin{subfigure}[b]{0.3\textwidth}
         \centering
         \includegraphics[width=\textwidth]{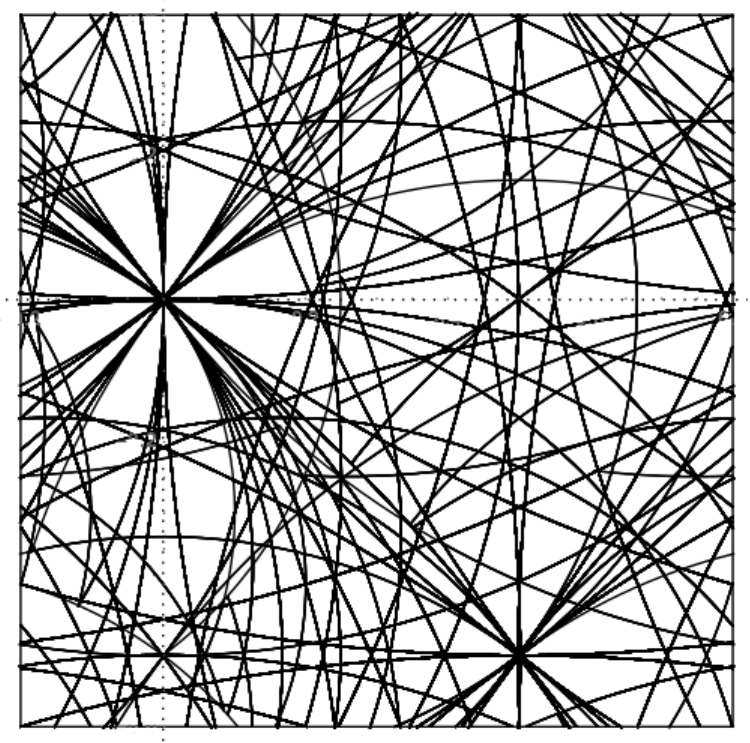}
         \caption{$\balpha=(\frac15,\frac35)$.} \label{fig:markov1/5,3/5}
     \end{subfigure}
 	\caption{The finite partition for $T_\balpha$ for various cases of $\balpha$.}
        \label{fig:markov}
\end{figure}

The proof of Theorem~\ref{t:mp} is given in the next section.

\section{Proof of the main result} \label{sec:markov}

Before we start the proof we recall some mapping properties of circles and lines in $\mathbb{C}$. For $r \in (0,\infty)$ and $m \in \C$ write $C_r(m) \subseteq \C$ for the circle with radius $r$ and center $m$. A subset of $\mathbb C$ that is either a circle or a line will be called a {\it generalized circle} and can be written in the form
\begin{equation}\label{eq:complexCircle}
M(a,b,c)= \big\{ z\in \mathbb{C} \;:\; a z\overline{z} - \overline{b}z - b\overline{z} + c = 0\big\},
\end{equation}
with $a,c\in\R$ and $b\in \C$ satisfying $|b|^2 > ac$ (see {\it e.g.}~\cite[Section~3.2]{Hahn:94}). In particular, if $a=0$ then $M(0,b,c)$ is a line with (possibly infinite) slope $-\frac{\Re b}{\Im b}$, while for $a \neq 0$ the set $M(a,b,c)$ is a circle with radius $r=\frac{\sqrt{|b|^2 - ac}}{|a|}$ and center $m=\frac{b}{a}$, {\it i.e.},
\begin{equation}\label{eq:transform}
M(a,b,c)=C_{\frac{\sqrt{|b|^2 - ac}}{|a|}}\Big(\frac{b}{a}\Big).
\end{equation}
The representation \eqref{eq:complexCircle} is particularly convenient when taking reciprocals of generalized circles. Indeed, direct calculation shows that
\begin{equation}\label{eq:1over}
\frac{1}{M(a,b,c)} = M(c,\overline{b},a).
\end{equation}
Also the effect of translation by a complex number can be seen in terms of the representation \eqref{eq:complexCircle}. Indeed, again by direct calculation, for each $z\in\C$ we gain the identity 
\begin{equation}\label{eq:translatey}
M(a,b,c) - z = M(a, b-az, a z\overline{z} - \overline bz - b\overline z + c).
\end{equation}

We can now start with the proof of Theorem~\ref{t:mp}. Let $p,q,r,s\in\mathbb{N}$ be given in a way that $\balpha=(\frac{p}{q}, \frac{r}{s}) \in \mathcal D \cap \mathbb Q^2$. In view of \cite[Lemma~3.6]{LV24}, it suffices to prove that $T_{\balpha}$ admits a finite partition. For convenience, we will enlarge the domain of $T_\balpha$ to $\overline{U}_\balpha\setminus\{0\}$ by using the same formula $z \mapsto \frac1z - \lfloor \frac1z \rfloor_{\balpha}$ on the whole boundary of the half-open square $U_\balpha$. The boundary of $U_{\balpha}$ consists of four line segments each of which is contained in one of the four lines of the collection
\begin{equation*}\label{q:boundaryU2}
 \mathcal{G}_0= 
\big\{ M(0,-si,2(r-s)),\; M(0,-si,2r),\;  M(0,q,2(p-q)), \;  M(0,q,2p) \big\}.  
\end{equation*}

We need the following criterion for the existence of a finite partition for $T_\balpha$. 
\begin{lemma}\label{lem:Tfin}
If 
\begin{equation} \label{eq:invariance}
S = \bigcup_{j=0}^\infty T_{\balpha}^j (\partial U_{\balpha} )  = \bigcup_{j=0}^\infty\bigcup_{G_0\in \mathcal{G}_0} T_{\balpha}^j (G_0)
\end{equation}
is contained in a finite union of generalized circles then the transformation $T_{\balpha}$ admits a finite partition.
\end{lemma}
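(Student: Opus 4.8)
The plan is to show that a partition whose defining boundaries come from the finitely many generalized circles containing $S$ is a finite partition for $T_\balpha$. First I would fix the finite collection $\mathcal{C} = \{M_1,\dots,M_N\}$ of generalized circles guaranteed by the hypothesis, so that $S \subseteq \bigcup_{k=1}^N M_k$, and I would enlarge it (still keeping it finite) to include all four lines of $\mathcal{G}_0$ (the sides of $\overline U_\balpha$), as well as every generalized circle of the form $C_k - b$ for $M_k \in \mathcal{C}$ and $b \in \mathbb{Z}[i]$ with $[b]_\balpha \neq \emptyset$ --- note there are only finitely many nonempty cylinders whose image could meet $\overline U_\balpha$, so this enlargement is still finite; call the resulting finite collection $\mathcal{C}'$. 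Let $\mathcal{P}$ be the partition of $U_\balpha$ into the connected components of $U_\balpha \setminus \bigcup_{M \in \mathcal{C}'} M$ (equivalently, the common refinement of the two-sided pieces cut out by the members of $\mathcal{C}'$), declaring two components equivalent up to Lebesgue-null sets when they share the same closure. Since $\mathcal{C}'$ is finite and each $M$ is a generalized circle, $\mathcal{P}$ is a finite partition of $U_\balpha$ in the sense defined in Section~\ref{sec:alpha}.

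Next I would verify that $\mathcal{P}$ is a finite partition \emph{for the map} $T_\balpha$, i.e. that for every $b \in \mathbb{Z}[i]$ with $[b]_\balpha \neq \emptyset$ and every $P \in \mathcal{P}$ the set $T_\balpha([b]_\balpha \cap P)$ is, up to a null set, a union of elements of $\mathcal{P}$. The key point is that on the cylinder $[b]_\balpha$ the map $T_\balpha$ acts as $z \mapsto \tfrac1z - b$, a Möbius transformation composed with a translation, hence it sends generalized circles to generalized circles via the formulas \eqref{eq:1over} and \eqref{eq:translatey}. The boundary $\partial([b]_\balpha \cap P)$ is contained in $\partial U_\balpha \cup \bigl(\bigcup_{M\in\mathcal{C}'} M\bigr) \cup \partial[b]_\balpha$; the cylinder boundaries $\partial[b]_\balpha$ themselves lie on translates of $\partial(1/U_\balpha)$, which are reciprocals/translates of the lines in $\mathcal{G}_0$, hence are generalized circles in a finite list. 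Applying $z \mapsto \tfrac1z - b$ to each of these boundary pieces, I get that $\partial T_\balpha([b]_\balpha \cap P)$ is contained in $\partial U_\balpha$ together with the image under $z \mapsto \tfrac1z - b$ of finitely many members of $\mathcal{C}'$ and of the relevant cylinder walls. The crucial invariance is that, because $S = \bigcup_{j\ge0} T_\balpha^j(\partial U_\balpha)$ is contained in $\bigcup_k M_k$ and is forward-invariant under $T_\balpha$ by construction of $S$, every such image boundary piece again lies in $\bigcup_{M\in\mathcal{C}'} M$ --- this is exactly why I put the translates $M_k - b$ into $\mathcal{C}'$. Therefore $T_\balpha([b]_\balpha \cap P)$ is an open subset of $U_\balpha$ whose boundary lies in the union of the curves defining $\mathcal{P}$, so it is a union of elements of $\mathcal{P}$ up to a null set.

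The remaining bookkeeping is to make the ``finitely many relevant cylinders'' claim precise: a cylinder $[b]_\balpha$ is nonempty only when $(U_\balpha + b) \cap \tfrac1{U_\balpha} \neq \emptyset$, and since $U_\balpha$ is bounded away from $0$ except near its corner(s), $\tfrac1{U_\balpha}$ is contained in a bounded region plus finitely many ``spikes'' going to infinity along the coordinate directions; the standard argument (as in the Hurwitz case) shows the set of such $b$ is infinite, but the set of $b$ for which $T_\balpha([b]_\balpha)$ can \emph{introduce a new boundary curve not already of the form $M_k - b$ with $|b|$ bounded} is finite, and for the infinitely many ``large'' $b$ the image $T_\balpha([b]_\balpha)$ is a region bounded only by translates of the fixed lines $\mathcal{G}_0$ and their reciprocals, which are already among the generalized circles we track. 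I expect the main obstacle to be precisely this last finiteness/uniformity argument: one must argue carefully that although there are infinitely many digits $b$, only finitely many distinct generalized circles arise as boundaries of the sets $T_\balpha([b]_\balpha \cap P)$, exploiting that for large $|b|$ the cylinder $[b]_\balpha$ is a full ``lune'' whose image under $z\mapsto 1/z$ is a translate of $U_\balpha$ (intersected with $1/U_\balpha$), so its boundary reciprocates to the fixed family $\mathcal{G}_0$ together with translates of $\mathcal{G}_0$'s reciprocals, and these translates, after subtracting $b$, fall into the finite collection $\mathcal{C}'$ by the forward-invariance of $S$. Once this uniform finiteness is in hand, closure under $T_\balpha$ of the partition $\mathcal{P}$ follows immediately and the theorem is proved.
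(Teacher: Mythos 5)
There is a genuine gap, and you have in fact flagged it yourself as ``the main obstacle'' without closing it. Your plan is to build the partition directly from the finitely many generalized circles $M_1,\dots,M_N$ covering $S$ and then verify invariance under $T_\balpha$. This fails for two related reasons. First, the hypothesis only tells you that $S=\bigcup_j T_\balpha^j(\partial U_\balpha)$ is \emph{contained in} $\bigcup_k M_k$; the circles $M_k$ will in general contain many points that do not belong to $S$, and the forward invariance $T_\balpha(S)\subseteq S$ says nothing about where those extra points go. Concretely, for a piece $P$ cut out by the $M_k$ and a digit $b$, the boundary of $T_\balpha([b]_\balpha\cap P)$ contains arcs of $\tfrac1{M_k}-b$ coming from $M_k\cap[b]_\balpha$, and these lie on $\bigcup_j M_j$ only for the sub-arcs that happen to lie in $S$. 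Second, your proposed repair --- enlarging the collection by all translates $M_k-b$ (or $\tfrac1{M_k}-b$) over nonempty cylinders --- destroys finiteness: there are infinitely many nonempty cylinders $[b]_\balpha$ (they accumulate at $0$, and every image $T_\balpha([b]_\balpha)$ lies in $U_\balpha$), and a single circle $M_k$ passing near $0$ meets infinitely many of them, producing infinitely many distinct translated circles whose arcs genuinely enter $U_\balpha$. The claim that ``only finitely many cylinders are relevant'' is asserted, not proved, and as stated it is not true in the form you need.

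The way out is to reverse the logic: do not build the partition from the circles and then check invariance; instead build it from the dynamics so that invariance is automatic, and use the circles only for a counting bound. Set $\mathcal{P}_0=\{T_\balpha([b]_\balpha)\}$ and $\mathcal{P}_n=\{T_\balpha([b]_\balpha\cap P): P\in\mathcal{P}_{n-1}\}$. An induction using $\partial T_\balpha([b]_\balpha)\subseteq \partial U_\balpha\cup T_\balpha(\partial U_\balpha)$ shows that every $P\in\mathcal{P}_n$ has $\partial P\subseteq\bigcup_{j=0}^{n+1}T_\balpha^j(\partial U_\balpha)\subseteq S$ --- only points of $S$ ever occur as boundary points, which is exactly what your construction cannot guarantee. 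Since $S$ lies in a finite union of generalized circles, these cut $U_\balpha$ into a bounded number of regions, so $\#\mathcal{P}_n$ is uniformly bounded, the nested refining sequence stabilizes, and $\mathcal{P}=\bigcup_n\mathcal{P}_n$ is the desired finite partition. Note that this argument never needs a finiteness statement about the digit set $b$, which is precisely the point where your approach gets stuck.
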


\begin{proof}
Define $\mathcal{P}_n$, $n\ge 0$, recursively by $\mathcal{P}_0= \{T_\balpha(\Delta_{\balpha}(b)) \; :\; b\in \mathbb{Z}[i]\}$ and
\[
 \mathcal{P}_n = \big\{T_\balpha(\Delta_{\balpha}(b) \cap P) \; :\; b\in \mathbb{Z}[i],\, P\in \mathcal{P}_{n-1} \big\} \qquad (n\ge 1).
\]
For each $b \in \mathbb Z[i]$ we have $\partial T_{\balpha}(\Delta_{\balpha}(b)) \subset \partial U_{\balpha} \cup T_{\balpha}(\partial U_{\balpha})$ (see Figure~\ref{fig:smallalpha}). Thus,  $\mathcal{P}_0$ is a partition of $U_\balpha$ and $(\mathcal{P}_n)_{n\ge 0}$ is a nested sequence of partitions of $U_\balpha$. For each $P\in \mathcal{P}_n$ we have by induction that
\[
\partial P \subset  \bigcup_{j=0}^{n+1}T_{\balpha}^j (\partial U_{\balpha} ) \subset S. 
\]
By assumption, $S$ is a subset of a finite union of generalized circles. These circles and lines divide $U_\balpha$ into finitely many pieces. This implies that $\#\mathcal{P}_n$ is finite and uniformly bounded in $n$. Thus $(\mathcal{P}_n)_{n\ge 0}$ eventually stabilizes and, hence,  
\[
\mathcal{P}=\bigcup_{n\ge 0} \mathcal{P}_n
\]
is a finite partition of $U_\balpha$. Since by construction we have $T_\balpha(\Delta_{\balpha}(b) \cap P)$ is a union of elements of $\mathcal{P}$ for each $b\in\mathbb{Z}[i]$ and each $P\in \mathcal{P}$, the result follows.
\end{proof}

Let $S$ be as in \eqref{eq:invariance}. By the definition of $T_\balpha$, we have 
\begin{equation}\label{eq:SU}
S\subset \bigcup_{G\in \mathcal{U}}G, 
\end{equation}
where $\mathcal{U}$ is the collection of all generalized circles $G$ that satisfy the following condition: There are $N\ge 1$, $G_0\in \mathcal{G}_0$, and $z_0,\ldots, z_{N-1}\in \mathbb {Z}[i]$ such that the generalized circles $G_1,\ldots,G_N$ defined recursively by $G_{n} = \frac1{G_{n-1}} - z_{n-1}$ satisfy $G_n\cap U_\balpha\not=\emptyset$ for $n\in\{1,\ldots,N\}$ and $G_{N-1}=G$.\footnote{Note that, although already $G_{N-1}=G$ we need to go one step further for technical reasons. This does not restrict the choices of $G$ because it is always possible to choose $z_{N-1}$ in a way that $G_{N}\cap U_\balpha\not=\emptyset$.} In view of Lemma~\ref{lem:Tfin} it suffices to show that $\mathcal{U}$ is a finite collection.

Let $G\in \mathcal{U}$ be arbitrary and let $G_0,G_1,\ldots,G_N$ be as above with $G=G_{N-1}$. Since $G_0\in \mathcal{G}_0$, we have $G_0=M(2A_{-1}, B_0,2A_0)$ with $A_{-1}=0$, $A_0\in \mathbb Z\setminus \{0\}$, and $B_0 \in \mathbb Z[i]$. Recursively define the sequences $(A_n)_{-1 \le n\le N}$ and $(B_n)_{0\le n\le  N}$ by
\begin{equation}\label{q:rec}\begin{split}
2A_{n+1} = \ & 2A_n z_n \overline{z_n} - B_n z_n -  \overline{B_n} \overline{z_n} + 2A_{n-1},\\
B_{n+1} =\ & \overline{B_n} - 2 A_nz_n.
\end{split}\end{equation}

\begin{lemma}\label{l:recursive}
Let $G_0,\ldots,G_N$ be given as above and let $n\in \{0,\ldots, N\}$. We have $G_n = M(2A_{n-1},B_n,2A_n)$, where $A_n, B_n$ are defined by the recurrence \eqref{q:rec} and $A_{-1}=0$, $A_n \in \mathbb Z$, and $B_n \in \mathbb Z[i]$.
\end{lemma}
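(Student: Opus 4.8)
The plan is to prove Lemma~\ref{l:recursive} by induction on $n$, using the two transformation rules \eqref{eq:1over} and \eqref{eq:translatey} for generalized circles together with the recursion \eqref{q:rec}. The base case $n=0$ is immediate: by hypothesis $G_0 \in \mathcal{G}_0$, so $G_0 = M(0, B_0, 2A_0) = M(2A_{-1}, B_0, 2A_0)$ with $A_{-1}=0$, $A_0 \in \mathbb{Z}\setminus\{0\}$, and $B_0 \in \mathbb{Z}[i]$, as one reads off directly from the explicit list defining $\mathcal{G}_0$.

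For the inductive step, suppose $G_n = M(2A_{n-1}, B_n, 2A_n)$ with $A_{n-1}, A_n \in \mathbb{Z}$ and $B_n \in \mathbb{Z}[i]$. By the definition of the $G_j$ we have $G_{n+1} = \frac{1}{G_n} - z_n$. First apply \eqref{eq:1over} to get $\frac{1}{G_n} = \frac{1}{M(2A_{n-1}, B_n, 2A_n)} = M(2A_n, \overline{B_n}, 2A_{n-1})$. Then apply the translation rule \eqref{eq:translatey} with $z = z_n$ to this generalized circle: with $(a,b,c) = (2A_n, \overline{B_n}, 2A_{n-1})$ we obtain
\[
M(2A_n, \overline{B_n}, 2A_{n-1}) - z_n = M\bigl(2A_n,\ \overline{B_n} - 2A_n z_n,\ 2A_n z_n \overline{z_n} - B_n z_n - \overline{B_n}\,\overline{z_n} + 2A_{n-1}\bigr).
\]
Comparing this with the desired form $M(2A_n, B_{n+1}, 2A_{n+1})$ shows that the new "middle" coefficient is $B_{n+1} = \overline{B_n} - 2A_n z_n$ and the new "last" coefficient is $2A_{n+1} = 2A_n z_n \overline{z_n} - B_n z_n - \overline{B_n}\,\overline{z_n} + 2A_{n-1}$, which is exactly the recursion \eqref{q:rec}. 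It remains to check the integrality claims. Since $z_n \in \mathbb{Z}[i]$ and $B_n \in \mathbb{Z}[i]$ and $A_n \in \mathbb{Z}$, the expression $B_{n+1} = \overline{B_n} - 2A_n z_n$ lies in $\mathbb{Z}[i]$. For $A_{n+1}$: the quantity $z_n\overline{z_n} = |z_n|^2$ is a nonnegative integer, and $B_n z_n + \overline{B_n}\,\overline{z_n} = 2\Re(B_n z_n)$ is a rational integer because $B_n z_n \in \mathbb{Z}[i]$ so its real part is in $\mathbb{Z}$ and $B_n z_n + \overline{B_n z_n}$ is twice that; hence $2A_{n+1} = 2A_n|z_n|^2 - 2\Re(B_n z_n) + 2A_{n-1}$ is an even integer, so $A_{n+1} \in \mathbb{Z}$. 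This completes the induction.

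I do not expect a serious obstacle here — the lemma is essentially a bookkeeping statement that packages the action of $T_\balpha$ on generalized circles into an integer recurrence, and everything follows from the three algebraic identities \eqref{eq:complexCircle}–\eqref{eq:translatey} already recorded in the text. The one point requiring a moment's care is the factor of $2$: the natural transformation rules produce coefficients $(2A_n, \overline{B_n}, 2A_{n-1})$ and then $(2A_n, B_{n+1}, 2A_{n+1})$, and one must verify that the "last" coefficient is genuinely of the form $2A_{n+1}$ with $A_{n+1} \in \mathbb{Z}$ rather than merely a half-integer; this is where the observation that $B_n z_n + \overline{B_n}\,\overline{z_n} \in 2\mathbb{Z}$ is used. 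Writing $A_0 \neq 0$ is worth noting in the base case (it records that $G_0$ is a genuine line, not degenerate), though it plays no role in the subsequent steps, where $A_n$ may well vanish. The real work of the paper — bounding the collection $\mathcal{U}$, i.e., showing the recurrence \eqref{q:rec} can only generate finitely many triples $(A_{n-1}, B_n, A_n)$ subject to the constraint $G_n \cap U_\balpha \neq \emptyset$ — lies in the lemmas that follow this one, not in Lemma~\ref{l:recursive} itself.
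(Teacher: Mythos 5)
Your proof is correct and follows essentially the same route as the paper: induction on $n$, applying \eqref{eq:1over} and then \eqref{eq:translatey} to match the coefficients of $G_{n+1}$ with the recurrence \eqref{q:rec}, and verifying integrality via the observation that $B_n z_n + \overline{B_n}\,\overline{z_n} = 2\Re(B_n z_n) \in 2\mathbb{Z}$. No gaps.
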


\begin{proof}
We prove the result inductively. We have $A_{-1}=0$ and for $n=0$ all claims of the lemma are true. Suppose that all claims are true for each $k\in\{0,\ldots,n\}$ with $n\in\{0,\ldots, N-1\}$. Then $G_n = M(2A_{n-1},B_n, 2A_{n})$. Therefore, by \eqref{eq:1over} we have $\tfrac1{G_n} = M(2A_n,\overline{B_n}, 2A_{n-1})$. Applying \eqref{eq:translatey} now implies, together with \eqref{q:rec}, that
\footnote{If $\frac1{G_n}$ is a line then there are infinitely many choices for $z_n$ satisfying $G_{n+1}=\frac1{G_n}-z_n$ for a fixed $G_{n+1}$. However, in this case $A_n=0$ and, because all the possible values of $z_n$ must lie on a line parallel to $\frac1{G_n}$, the quantity $B_nz_n + \overline{B_n}\overline{z_n}$ is the same for each of these choices. Thus the values $A_{n+1}$ and $B_{n+1}$ do not depend on the choice of $z_n$.}
\[
\begin{split}
 G_{n+1} = \frac{1}{G_n}-z_n &= M(2A_n,\overline{B_n}-2A_nz_n, 2A_nz_n\overline{z_n}-B_nz_n-\overline{B_n}\overline{z_n} + 2A_{n-1}) \\
 &= M(2A_{n},B_{n+1}, 2A_{n+1}). 
\end{split}
\]
Because $A_{-1}=0$, $A_k \in \mathbb Z$, and $B_k \in \mathbb Z[i]$ for all $k\in \{0,\ldots, n\}$ by assumption, we have $2A_{n+1}=2A_n |z_n|^2 - 2\Re(B_n z_n) + 2A_{n-1} \in 2\mathbb{Z}$, hence, $A_{n+1} \in \mathbb{Z}$, and $B_{n+1}=\overline{B_n} - 2 A_n\overline{z_n} \in \mathbb Z[i]$. This concludes the induction step.
\end{proof}

We claim that for each $n \in \{0,\ldots, N\}$ we have 
\begin{equation}\label{q:B0}
B_n \overline{B_n} - 4A_{n-1}A_n = |B_0|^2.
\end{equation}
Indeed, this follows by induction because \eqref{q:B0} trivially holds for $n=0$ and, using the recurrence relations from \eqref{q:rec}, we have
\[
\begin{split}
B_{n+1} \overline{B_{n+1}} - 4A_nA_{n+1} &= (\overline{B_n} - 2 A_nz_n)(B_n - 2 A_n \overline{z_n}) \\
&\qquad\qquad -2A_n(2A_n z_n \overline{z}_n - B_n z_n -  \overline{B_n}\overline{z_n} + 2A_{n-1}) \\
&\;=B_n \overline{B_n} - 4A_{n-1}A_n,
\end{split}
\]
for $n \in \{0,\ldots, N-1\}$.

If $G_n$ is a circle, we get the following formula of its radius $\rho_n$.

\begin{lemma}\label{l:radius}
Let $G_0,\ldots, G_N$ be as above. For $n\in\{0,\ldots, N\}$ with $G_n$ a circle 
the radius $\rho_n$ of this circle satisfies the identity
\begin{equation}\label{q:rhon}
\rho_n^2 = \frac{|B_0|^2}{4A_{n-1}^2}.
\end{equation}
\end{lemma}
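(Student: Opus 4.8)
The plan is to reduce the claimed formula to an \emph{algebraic invariant} of the recurrence \eqref{q:rec}. Recall from \eqref{eq:transform} that a generalized circle $M(a,b,c)$ with $a\neq 0$ is the circle of radius $\sqrt{|b|^2-ac}/|a|$. Since by Lemma~\ref{l:recursive} we have $G_n=M(2A_{n-1},B_n,2A_n)$, and this is a genuine circle precisely when $A_{n-1}\neq 0$, its radius satisfies
\[
\rho_n^2=\frac{|B_n|^2-(2A_{n-1})(2A_n)}{(2A_{n-1})^2}=\frac{|B_n|^2-4A_{n-1}A_n}{4A_{n-1}^2}.
\]
Hence \eqref{q:rhon} is equivalent to the identity $|B_n|^2-4A_{n-1}A_n=|B_0|^2$, and the strategy is to prove that this holds for every $n\in\{0,\ldots,N\}$.

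The key step is to show that the quantity $\Delta_n := |B_n|^2-4A_{n-1}A_n$ does not depend on $n$. For $n=0$ this is immediate, since $A_{-1}=0$ gives $\Delta_0=|B_0|^2$. For the inductive step I would substitute $B_{n+1}=\overline{B_n}-2A_nz_n$ and the first line of \eqref{q:rec}, rewritten as $2A_{n+1}=2A_n|z_n|^2-(B_nz_n+\overline{B_n}\,\overline{z_n})+2A_{n-1}$, into $\Delta_{n+1}=|B_{n+1}|^2-4A_nA_{n+1}$. Expanding gives $|B_{n+1}|^2=|B_n|^2-2A_n(B_nz_n+\overline{B_n}\,\overline{z_n})+4A_n^2|z_n|^2$ and $4A_nA_{n+1}=4A_n^2|z_n|^2-2A_n(B_nz_n+\overline{B_n}\,\overline{z_n})+4A_nA_{n-1}$, so the terms $4A_n^2|z_n|^2$ and $2A_n(B_nz_n+\overline{B_n}\,\overline{z_n})$ cancel and one is left with $\Delta_{n+1}=|B_n|^2-4A_{n-1}A_n=\Delta_n$. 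By induction, $\Delta_n=\Delta_0=|B_0|^2$ for all $n\in\{0,\ldots,N\}$; substituting back into the radius formula above yields \eqref{q:rhon} whenever $G_n$ is a circle.

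I expect no real obstacle here: the entire content is the observation that $|B_n|^2-4A_{n-1}A_n$, which is (up to the factor $4A_{n-1}^2$) exactly the discriminant $|b|^2-ac$ controlling the radius in \eqref{eq:transform}, is conserved by the recurrence; after that it is a one-line computation. The only point deserving a remark is well-definedness, already handled in the footnote to Lemma~\ref{l:recursive}: when $\tfrac1{G_n}$ is a line we have $A_n=0$, and although $z_n$ is then not unique, the combination $B_nz_n+\overline{B_n}\,\overline{z_n}$ is the same for all admissible choices, so $\Delta_n$ and the formula remain unambiguous. (Note also that the hypothesis that $G_n$ is a circle is precisely $A_{n-1}\neq 0$, so the denominator in \eqref{q:rhon} is nonzero.)
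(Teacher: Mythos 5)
Your proof is correct and follows essentially the same route as the paper: both establish by induction that the quantity $|B_n|^2-4A_{n-1}A_n$ is conserved by the recurrence \eqref{q:rec} (this is exactly the paper's identity \eqref{q:B0}), equals $|B_0|^2$ because $A_{-1}=0$, and then read off the radius from \eqref{eq:transform}. The algebraic cancellation you describe is the same computation the paper carries out, so there is nothing to add.
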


\begin{proof}
Recall that $G_n=M(2A_{n-1},B_n,2A_n)$ is a circle if and only if $A_{n-1} \neq 0$. In that case by \eqref{eq:transform} and \eqref{q:B0} its radius $\rho_n$ satisfies
\begin{equation*}\label{q:rhon2}
\rho_n^2 = \frac{B_n \overline{B_n} - 4A_{n-1}A_n}{4A_{n-1}^2} = \frac{|B_0|^2}{4A_{n-1}^2}. \qedhere
\end{equation*}
\end{proof}

The next lemma bounds the radii $\rho_n$ that can occur in the sequence $(G_n)_{0\le n \le N}$.

\begin{lemma}\label{l:rho}
Let $\rho_{\min} = \tfrac{\min\{|x-y|\,:\, x\in U_{\balpha},\, y\in C_1(0)\}}2$ and let $G_0,\ldots, G_N$ be as above. For $n\in\{0,\ldots, N\}$ with $G_n$ a circle we have $\rho_{\min} < \rho_n \le \frac{|B_0|}{2}$ for its radius $\rho_n$.
\end{lemma}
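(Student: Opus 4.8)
The plan is to establish the two bounds separately, both as consequences of the geometric constraint $G_n\cap U_\balpha\neq\emptyset$ that is built into the definition of the sequence $G_0,\ldots,G_N$. For the upper bound, I would argue recursively using Lemma~\ref{l:radius}. Since $A_{-1}=0$ and $A_n\in\Z$, the integer $A_{n-1}$ is nonzero exactly when $G_n$ is a circle, and then $|A_{n-1}|\ge 1$, so \eqref{q:rhon} immediately gives $\rho_n^2=|B_0|^2/(4A_{n-1}^2)\le |B_0|^2/4$, i.e.\ $\rho_n\le |B_0|/2$. This part is essentially a one-line consequence of the previous lemma.

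For the lower bound I would use the fact that $T_\balpha$ acts by $z\mapsto 1/z-z_n$, so $G_{n+1}=\tfrac1{G_n}-z_n$ means $G_n$ is obtained from $G_{n+1}$ by translating back by $z_n$ and inverting; equivalently, $\tfrac1{G_n}=G_{n+1}+z_n$. The key point is that $G_{n+1}\cap U_\balpha\neq\emptyset$ by hypothesis, hence $\tfrac1{G_n}=G_{n+1}+z_n$ meets $U_\balpha+z_n$, a unit square translate. But more usefully: whenever $G_n$ itself is a circle, consider the circle $\tfrac1{G_n}$; since $G_{n+1}=\tfrac1{G_n}-z_{n}$ has a point in $U_\balpha$, the circle $\tfrac1{G_n}$ has a point in $U_\balpha+z_n$, and since $G_n$ is a circle (so $0\notin G_n$ and $\tfrac1{G_n}$ is a genuine circle), we can transfer this back. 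Concretely, $G_n=M(2A_{n-1},B_n,2A_n)$ with $A_{n-1}\neq0$ and its center is $m_n=B_n/(2A_{n-1})$; I want to show $G_n$ is not too small by showing it must meet $C_1(0)$ or come close to $U_\balpha$.

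Here is the cleaner route I would take for the lower bound. Since $G_{n+1}\cap U_\balpha\neq\emptyset$, pick $w\in G_{n+1}\cap U_\balpha$; then $w+z_n\in \tfrac1{G_n}$, so $\tfrac1{w+z_n}\in G_n$ (valid since $w+z_n\neq 0$ because $0\notin \tfrac1{G_n}$ as $G_n$ is a circle through no point forcing... actually $0\in\tfrac1{G_n}$ iff $G_n$ passes through $\infty$, i.e.\ is a line, which it is not). Thus $G_n$ contains the point $\tfrac1{w+z_n}$. Now the definition of the sequence also guarantees $G_n\cap U_\balpha\neq\emptyset$ for $n\ge 1$, and for $n=0$ we have $G_0\subset \partial U_\balpha$; so in all cases $G_n$ contains a point $u\in \overline{U}_\balpha$. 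Hence $G_n$ contains both a point $u\in\overline U_\balpha$ and the point $\tfrac1{w+z_n}$, where $\tfrac1{w+z_n}=\tfrac1{G_n}$-preimage of a point of $U_\balpha$. The crucial observation is that $\tfrac1{w+z_n}$ and $u$ are distinct points of the circle $G_n$ that, between them, force the diameter to be at least $2\rho_{\min}$: indeed $u\in\overline U_\balpha$ while $\tfrac1{w+z_n}$ — being the reciprocal of $w+z_n$ with $w+z_n\in U_\balpha+z_n$ — need not obviously lie near $C_1(0)$.

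Let me instead use the characterization $\tfrac1{[b]_\balpha}=(U_\balpha+b)\cap\tfrac1{U_\balpha}$ analogous to Figure~\ref{fig:hurmap}: the image $\tfrac1{U_\balpha}$ is the exterior of the ``butterfly'' built from the four arcs $\tfrac1{G_0}$, $G_0\in\mathcal G_0$, and these arcs lie on circles of radius $\tfrac1{2\alpha_i}$ etc. The real content of the lemma is that once $G_n$ is a circle meeting $U_\balpha$, its radius cannot shrink below $\rho_{\min}$, because $G_{n+1}=\tfrac1{G_n}-z_{n}$ also meets $U_\balpha$, forcing $\tfrac1{G_n}$ to meet the translate $U_\balpha+z_n$; since $\tfrac1{G_n}$ is a circle of radius $1/(4A_{n-1}^2\rho_n^2)^{1/2}\cdot|B_0|$... — rather, $\tfrac1{G_n}$ has radius $\rho_n\cdot$(something). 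The precise relation: if $G_n=C_{\rho_n}(m_n)$ then $\tfrac1{G_n}=C_{\rho_n/|\,|m_n|^2-\rho_n^2|}(\bar m_n/(|m_n|^2-\rho_n^2))$. Demanding that this circle meet $U_\balpha+z_n$ while $G_n$ meets $U_\balpha$ should pin $\rho_n$ from below, and I expect the extremal configuration is precisely when $G_n$ is tangent to $C_1(0)$ and to $\partial U_\balpha$, giving the constant $\rho_{\min}=\tfrac12\min\{|x-y|:x\in U_\balpha, y\in C_1(0)\}$.

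**Where the difficulty lies.** The upper bound is immediate. The genuine obstacle is the lower bound: one must extract from ``$G_n$ meets $U_\balpha$ and $G_{n+1}=\tfrac1{G_n}-z_n$ meets $U_\balpha$'' a quantitative lower bound on $\rho_n$. I expect the argument runs as follows: if $G_n$ is a circle, then $\tfrac1{G_n}$ is a circle, and $G_n\cap U_\balpha\neq\emptyset$ forces $G_n\cap C_1(0)\neq\emptyset$ as well — because $G_n$ passes through a point of $U_\balpha\subset B_1(0)$ (as $\balpha\in\mathcal D$ implies $U_\balpha\subset \overline{B_1(0,0)}$ in the relevant sense, i.e.\ $\overline U_\balpha\subset \overline{B_1(0)}$) and also, via the next inversion landing in $U_\balpha$ again, through a point whose reciprocal lies in $U_\balpha+z_n$, hence outside the open unit disk when $z_n\neq 0$, so $G_n$ crosses $C_1(0)$. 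A circle crossing $C_1(0)$ and meeting $U_\balpha$ has two points at distance $\ge \min\{|x-y|:x\in U_\balpha,y\in C_1(0)\}=2\rho_{\min}$, whence its diameter, and thus $2\rho_n\ge 2\rho_{\min}$ — and strictness follows because equality would force a degenerate tangency incompatible with $z_n\in\Z[i]$ and the open conditions. I would carry this out by: (i) recalling $\overline U_\balpha\subset\overline{B_1(0)}$ from $\balpha\in\mathcal D$; (ii) showing $G_n$ (when a circle, $n\ge1$) meets both $\overline U_\balpha$ and the complement of $B_1(0)$, hence crosses $C_1(0)$, using that $1/(G_{n+1}+z_n)\subset G_n$ and $G_{n+1}+z_n$ reaches outside $B_1(0)$; (iii) concluding the diameter bound; (iv) handling strictness and the base case $n=0$ (where $G_0$ is a line, so the circle hypothesis is vacuous) separately. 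The main technical care is step (ii): ensuring $G_n$ genuinely has a point of modulus $\ge 1$, which relies on $z_n\neq 0$ on the relevant branch or, if $z_n=0$ is possible, on a direct check that $G_{n+1}=\tfrac1{G_n}$ still forces the bound.
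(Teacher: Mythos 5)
Your upper bound is correct and is exactly the paper's argument: $A_{n-1}\in\Z\setminus\{0\}$ when $G_n$ is a circle, so \eqref{q:rhon} gives $\rho_n\le|B_0|/2$ at once. The lower bound, however, has a genuine gap. Your plan hinges on step (ii), the claim that a circle $G_n$ meeting $\overline{U}_\balpha$ must also cross $C_1(0)$, so that its diameter is at least $2\rho_{\min}$. This claim is false, and the paper's proof is organized precisely around the case you are missing: $G_n$ can be a circle contained entirely in the open unit disk (take, say, a circle of radius $0.2$ through a point of $U_\balpha$; its reciprocal is a large circle far from the origin, and a translation by a Gaussian integer $z_n$ of large modulus can bring it back to meet $U_\balpha$, so all the constraints defining the sequence are satisfied). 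Nothing in the hypotheses rules this out, since the $z_n$ in the definition of $\mathcal U$ are arbitrary elements of $\Z[i]$ subject only to $G_k\cap U_\balpha\neq\emptyset$. Your attempted justification of (ii) also runs the inversion the wrong way: from the fact that $1/G_n=G_{n+1}+z_n$ reaches \emph{outside} $\overline{B_1(0)}$ you can only conclude that $G_n$ has a point \emph{inside} $B_1(0)$, which is useless for producing a point of modulus $\ge 1$ on $G_n$; and the hypothesis you would actually need (that $G_{n+1}$ meets $\overline{B_1(-z_n)}$) is simply not available.

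The paper closes this case by a different mechanism, which is the one idea your proposal lacks: an induction on $n$ combined with the expansion property of inversion on the unit disk. If $G_n$ is a circle that does not meet $C_1(0)$, then $G_n\subset B_1(0)$, and since $z\mapsto \frac1z-z_n$ increases distances between points of $B_1(0)$, one gets $\rho_{n+1}>\rho_n$, and the induction hypothesis $\rho_n>\rho_{\min}$ finishes that case (the base case is free since $G_0$ is a line). In the complementary case, where $G_n$ does meet $C_1(0)$, the paper does not bound $\rho_n$ directly as you do, but bounds $\rho_{n+1}$: it picks $a\in G_n\cap U_\balpha$ and $b\in G_n\cap C_1(0)$, notes $|1/a|>1+2\rho_{\min}$ while $|1/b|=1$, and concludes that $G_{n+1}$ contains the two points $\frac1a-z_n$ and $\frac1b-z_n$ at distance $>2\rho_{\min}$. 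Your direct diameter bound on $G_n$ in the crossing case is fine as far as it goes, but without a treatment of the interior case the induction does not close, so the proof as proposed does not go through.
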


\begin{figure}[h]
\begin{subfigure}[b]{0.47\textwidth}
\centering
\begin{tikzpicture}[scale=1.8]
\filldraw[thick, draw=black, fill=white] (-2/3,-2/5)--(1/3,-2/5)--(1/3,3/5)--(-2/3,3/5)--cycle;
\draw(-1.2,0)--(1.2,0)(0,-1.2)--(0,1.2);
\draw[thick] (0,0) circle (1cm);
\draw[thick, gray] (-2/3,3/5)--(-.7433,.6689);
\node[gray] at (-1,.7) {\small $2\rho_{\min}$};
\node at (-.2,.2) {\scriptsize $U_\alpha$};
\node at (.65,1) {\scriptsize $C_1(0)$};
\draw[thick, gray] (.5,.33) circle (.3cm);
\node[gray] at (.33,.73) {\scriptsize $G_n$};
\end{tikzpicture}
\caption{$G_n\cap C_1(0)= \emptyset$}
\end{subfigure}
\hfill
\begin{subfigure}[b]{0.47\textwidth}
\centering
\begin{tikzpicture}[scale=1.8]
\filldraw[thick, draw=black, fill=white] (-2/3,-2/5)--(1/3,-2/5)--(1/3,3/5)--(-2/3,3/5)--cycle;
\draw(-1.2,0)--(1.2,0)(0,-1.2)--(0,1.2);
\draw[thick] (0,0) circle (1cm);
\draw[thick, gray] (-2/3,3/5)--(-.7433,.6689);
\node[gray] at (-1,.7) {\small $2\rho_{\min}$};
\node at (.17,.2) {\scriptsize $U_\alpha$};
\node at (.65,1) {\scriptsize $C_1(0)$};
\draw[thick, gray] (.19,-1.2)--(-.55,1.2);
\node[gray] at (-.68,1.15) {\scriptsize $G_n$};
\filldraw[fill=gray, draw=gray] (-.3,.4) circle (.04cm);
\node[gray] at (-.35,.3) {\scriptsize $a$};
\filldraw[fill=gray, draw=gray] (-.45,.88) circle (.04cm);
\node[gray] at (-.35,.82) {\scriptsize $b$};
\end{tikzpicture}
\caption{$G_n\cap C_1(0) \neq \emptyset$}
\end{subfigure}
\caption{The two cases for the position of $G_n$ relative to $C_1(0)$ from the proof of Lemma~\ref{l:rho}}
\label{f:lemmarho}
\end{figure}
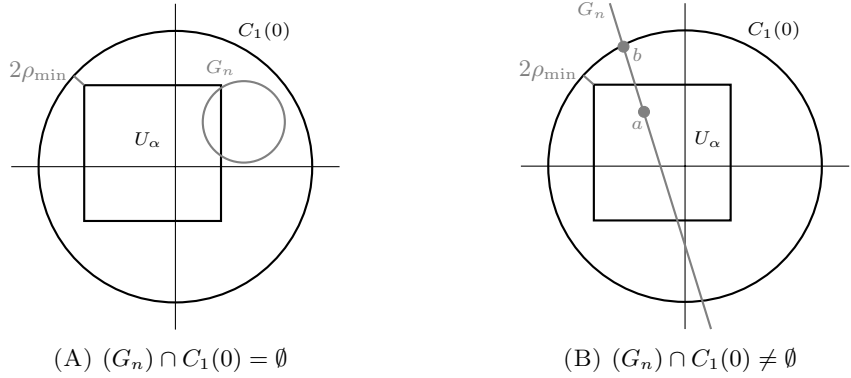

\begin{proof}
The upper bound follows from \eqref{q:rhon} because $A_{n-1} \in \mathbb Z \setminus \{0\}$. The lower bound will be proved by induction. First note that for $n=0$ there is nothing to prove because $G_0$ is not a circle. Let $n \in \{0,\ldots,N-1\}$ and assume that the lower bound holds for all $k\in\{0,\ldots n\}$ for which $G_k$ is a circle. We assume that $G_{n+1}$ is a circle, since otherwise there is nothing to prove. By definition, $G_n \cap \overline{U_\balpha}\not=\emptyset$. We distinguish two cases. If $G_n$ does not intersect the unit circle $C_1(0)$, $G_n$ is a circle inside $C_1(0)$, see Figure~\ref{f:lemmarho}~(\textsc{a}). Thus $G_{n+1}=\frac1{G_n}-z_n$ implies that $\rho_n < \rho_{n+1}$, because the function $z\mapsto \frac1z - z_n$ increases distances of arguments inside $C_1(0)$, and the result follows from the induction hypothesis. If $G_n$ intersects $C_1(0)$ there exist $a,b \in G_n$ such that $a \in U_{\balpha}$ and $b \in C_1(0)$, see Figure~\ref{f:lemmarho}~(\textsc{b}). Then $|a| < 1-2\rho_{\min}$, and, hence, we have
$
\big| \frac1{a} \big|> \frac1{1-2\rho_{\min}} > 1+2\rho_{\min}.
$
Since $| \frac1{b} | =1$, this yields $|(\frac1{a}-z_n) - (\frac1{b}-z_n)|
=|\frac1{a} - \frac1{b}| 
> 2\rho_{\min}$. But because $G_{n+1}=\frac1{G_n}-z_n$ we have $\frac1a-z_n, \frac1b-z_n \in G_{n+1}$. Thus  $\rho_{n+1} > \rho_{\min}$ holds also in this case and the lemma is proved. 
\end{proof}

We are now in a position to prove Theorem~\ref{t:mp}.
\begin{proof}[Proof of Theorem~\ref{t:mp}]
Let $G_0,\ldots,G_N$ with $G_{N-1}=G$ be as above. Lemma~\ref{l:rho} and \eqref{q:rhon} together imply that there is a constant $M_{\balpha}$ that depends only on $\balpha$ such that $|A_{n}| \in \{0,1, \ldots, M_{\balpha}\}$ for all $n \in\{-1,\ldots, N-1\}$ (note that $A_n=0$ if $G_{n+1}$ is a line). In view of \eqref{q:B0} this implies that $|B_n|^2 = |B_0|^2 + 4A_{n-1}A_n$ can only attain a finite number of values. Because $B_n \in \mathbb Z[i]$ this entails that there exists a finite set $F_{\balpha} \subset \mathbb Z[i]$ that only depends on $\balpha$ such that $B_n \in F_{\balpha}$ for all $n \in\{0,\ldots, N-1\}$. Thus, because $G=G_{N-1}$ we have in particular that $G\in \mathcal{U'}$, where  
\[
\begin{split}
\mathcal{U'} = \big\{M(2A_{N-1},B_{N-1},2A_{N-2})  \,:\, 
A_{N-1},A_{N-2} \in  \{0,1, \ldots, M_{\balpha}\},\, B_{N-1} \in F_\mathcal{\balpha} \big\}
\end{split}
\]
is a finite collection. Since $G$ was an arbitrary element of $\mathcal{U}$ this implies that $\mathcal{U} \subset \mathcal{U'}$. Thus $\mathcal{U}$ is finite and, hence, by \eqref{eq:SU}, $S$ is contained in a finite union of generalized circles. Thus the theorem follows from Lemma~\ref{lem:Tfin}.
\end{proof}

\section{Perspectives}\label{sec:perspectives}
As emphasized in \cite[Lemma~3.7 and Remark~3.8]{LV24}, in the one-dimensional case of {\it $\alpha$-continued fraction algorithms} (see \cite{nakada1981metrical}) the associated mapping $T_\alpha$ admits a finite partition if and only if $\alpha$ is either a rational or a quadratic irrational number. For the $\balpha$-Hurwitz algorithm \cite[Corollary~3.9]{LV24} gives a class of non-quadratic irrational parameters  for which this property does not hold. However, it remains open if Theorem~\ref{t:mp} can be extended to parameters $\balpha$ with quadratic irrational coordinates.

As mentioned before, Theorem~\ref{t:mp} paves the way for further explorations. For instance, Richard Lakein~\cite{lakein1973approximation} determines the optimal constant $L$ for which the convergents $\tfrac{p_n}{q_n}$ for $z$ of the Hurwitz algorithm satisfy $|z-\tfrac{p_n}{q_n}|< L|q_n|^{-2}$. In his proofs the existence of a finite partition for the Hurwitz algorithm plays a crucial role. Also, the explicit construction of a geometric natural extension of the Hurwitz algorithm from \cite{Ei2018} relies on the existence of a finite partition. Initial investigations into such a construction for the parameter $\balpha = (\frac13, \frac12)$ are done in \cite{mastersthesis}. Based on Theorem~\ref{t:mp} one can now try to obtain such results for $\balpha$-Hurwitz algorithms with rational parameters~$\balpha$.

It would also be interesting to investigate shifted versions of Jeffrey Shallit's algorithm \cite[Section II.3]{Sha79}.

\bibliographystyle{amsplain}
\bibliography{alphaHurwitz}

\end{document}